\theoremstyle{plain}
\newtheorem{theorem}{Theorem}[section]
\newtheorem{lemma}[theorem]{Lemma}
\theoremstyle{definition}
\newtheorem{definition}{Definition}
\newtheorem{remark}[theorem]{Remark}
\newtheorem{assumption}[theorem]{Assumption}
\numberwithin{equation}{section}
\numberwithin{theorem}{section}
\numberwithin{table}{section}
\numberwithin{figure}{section}
\title{On the superiority of PGMs to PDCAs \\
in nonsmooth nonconvex sparse regression}
\author{Shummin Nakayama\footnote{E-mail:~\texttt{shummin@kc.chuo-u.ac.jp};\quad ORICD: \texttt{https://orcid.org/0000-0001-7780-8348}}}
\affil[1]{Department of Data Science for Business Innovation, Chuo University
\footnote{1-13-27 Kasuga, Bunkyo-ku, Tokyo 112-8551, JAPAN}}
\author[1]{Jun-ya Gotoh\footnote{E-mail:~\texttt{jgoto@indsys.chuo-u.ac.jp};\quad ORICD: \texttt{https://orcid.org/0000-0002-0097-7298}}}
\begin{document}

\maketitle

\begin{abstract}
This paper conducts a comparative study of proximal gradient methods (PGMs) and proximal DC algorithms (PDCAs) for sparse regression problems which can be cast as Difference-of-two-Convex-functions (DC) optimization problems. It has been shown that for DC optimization problems, both General Iterative Shrinkage and Thresholding algorithm (GIST), a modified version of PGM, and PDCA converge to critical points.  Recently some enhanced versions of PDCAs are shown to converge to d-stationary points, which are stronger necessary condition for local optimality than critical points. 
In this paper we claim that without any modification, PGMs converge to a d-stationary point  not only to DC problems but also to more general nonsmooth  nonconvex problems under some technical assumptions. While the convergence to d-stationary points is known for the case where the step size is small enough, the finding of this paper is valid  also for extended versions such as GIST and its alternating optimization version, which is to be developed in this paper. Numerical results show that among several algorithms in the two categories, modified versions of PGM perform best among those not only in solution quality but also in computation time. 

\noindent
{{\bf Keywords:} Proximal gradient method \and DC algorithms \and  Proximal alternating linearized minimization \and D-stationary points \and Critical points}
\end{abstract}

\section{Introduction}\label{intro}
In regression analysis the variable selection and the outlier detection are crucial to achieve good out-of-sample performance. 
However, popular criteria for those tasks are computationally intractable in that most of them are cast as combinatorial or nonconvex optimization problems. 
Therefore, global optimization methods are applicable only for small-sized instances, and it is common to employ heuristics or approximation approaches. 
 For example, LASSO is the most popular approach to variable selection 
because of the tractability brought by the $\ell_1$-regularizer, but it does not necessarily select ``true" set of variables. 

To fix the limitation of LASSO, there have been proposed several nonconvex formulations such as $\ell_q$-approximation ($0<q<1$), SCAD~\cite{fan2001variable}, MCP~\cite{zhang2010nearly}, Capped-$\ell_1$~\cite{zhang2010analysis}, Log-Sum-Penalty~\cite{candes2008enhancing}, which all employ continuous but nonsmooth nonconvex regularizers to {\it approximate} the so-called $\ell_0$-norm. 

In this paper we consider another 
continuous 
formulation studied by, for example, \cite{ahn2020difference,gotoh2018dc,lu2018sparse}.  
In contrast to the preceding 
`approximation' 
approaches mentioned above, the approach we focus on 
%
has an exact penalty representation. 
Indeed, under some conditions it is shown to have exactly the same optimal solutions as those to the $\ell_0$-constrained counterpart. 

To solve the structured optimization problem, two groups of algorithms have been proposed: (a) those based on Proximal Gradient Method (PGM) and (b) those based on proximal DC (Difference of two Convex functions) Algorithm (PDCA). 
In the literature, each of the algorithms has been compared in part and it is not clear which algorithm performs best. 

The purpose of this paper is to compare several representative implementations from the two groups and to see which is the best for the specially structured task. 
Specifically, findings and contributions of this paper are summarized as follows.
\begin{itemize}
\item 
First of all,
we prove that for the structured 
nonsmooth nonconvex optimization 
 problems, any accumulation point generated by PGM is a d-stationary point, which is a stronger necessary condition for local optimality than the convergence to a critical point, to which PDCA-based algorithms have only been shown to converge except {for} a few enhanced versions. 
%
The convergence is well known for the convex case {and a nonconvex case where the algorithm falls into the Successive Upper-bound Minimization \cite{Razaviyayn2013unified}, but it is not applicable to a more general framework 
where we can incorporate, for example, a nonmonotone line search and/or larger step size}. For example, \cite{gong2013general} deals with a nonconvex case, but it only shows the convergence to a critical point (see Table \ref{tbl:convergence_summary}). 
{To our best knowledge, the convergence of such general PGMs to d-stationary points of a general nonsmooth nonconvex optimization has not yet been documented.}
\item
We extend  the exact penalty representation of \cite{gotoh2018dc,lu2018sparse} to a sparse robust regression, namely, regression analysis seeking a simultaneous pursuit of variable selection and outlier detection.
To solve the problem, we develop GPALM, a nonmonotone extension of Proximal Alternating Linearized Minimization (PALM)~\cite{bolte2014proximal}, and show its convergence to d-stationarity.
\item 
From numerical comparisons over a couple of sparse regression problems, we find that PDCAs perform 
{well} only when the penalty parameter is small whereas they result in worse solutions than PGMs do for large penalty, which is not preferable for the use in the sparse regression problems. 
More specifically, 
General Iterative Shrinkage and Thresholding algorithm (GIST) \cite{gong2013general}, 
which is a modified version of PGM, 
perform better than any other PDCA approaches and plain PGM, not only in computation time, but also in solution quality. 
While modified versions of PDCAs, which are guaranteed to converge to d-stationary points, performed better than plain PDCA in solution quality, it takes longer 
to attain the d-stationarity because of a combinatorial manipulation required at each iteration.
\end{itemize}

\begin{table}[h]
\begin{center}
\caption{Summary of convergence points for the DC problems}
\label{tbl:convergence_summary}
\begin{tabular}{@{~}l@{~}||@{~}c@{~}|@{~}c@{~}|@{~}c@{~}}
\hline
methods & critical point 
&l-stationary point & d-stationary point \\
\hline
PGM    &  $\longleftarrow$     & Attouch et al.~\cite{attouch2013convergence} & this paper \\
PALM   &  $\longleftarrow$     & Bolte et al.~\cite{bolte2014proximal}        & this paper \\
APG    &  $\longleftarrow$     & Lo and Lin~\cite{li2015accelerated}          & this paper\\
GIST & Gong et al.~\cite{gong2013general}& Lu and Li~\cite{lu2018sparse}      & this paper \\
GPALM  &  $\longleftarrow$     & $\longleftarrow$                             & this paper \\
PDCA   & Gotoh et al.~\cite{gotoh2018dc}    & (unknown) & (unknown) \\
PDCAe  & Wen et al.~\cite{wen2018proximal}& (unknown)                     & (unknown) \\
EPDCA  &  $\longleftarrow$     &  $\longleftarrow$                           & Lu et al.~\cite{lu2019enhanced} \\
NEPDCA &  $\longleftarrow$     &  $\longleftarrow$                      & Lu and Zhou~\cite{lu2019nonmonotone} \\
\hline
\end{tabular}
\begin{flushleft}
\scriptsize The d-stationarity implies the l-stationarity, whereas being a critical point is weaker than the other two. See Section \ref{sec:existing_algorithms} for the definition of each solution concept and some illustrative examples. The arrow ``$\longleftarrow$'' implies that the validity of the corresponding convergence is implied by the stronger convergence on its right-hand side.
\end{flushleft}
\end{center}
\end{table}

The structure of this paper is as follows. The next section describes the formulation to be analyzed and a motivation of the analysis in the context of the sparse regression. Section \ref{sec:existing_algorithms} summarizes two categories of existing algorithms and associates those with known convergence results. Section \ref{sec:d-stationarity} is devoted to {showing} the convergence of PGMs to d-stationary points. An extended application to sparse robust regression is discussed in Section \ref{sec:SpaRobReg}. Section \ref{sec:numerics} shows numerical results, comparing the algorithms discussed in this paper. Section \ref{sec:conclusion} concludes the paper. Some proofs are given in Appendix.

\section{Formulation and motivation}
In this paper, we consider several solution methods to solve optimization problems of the form:
\begin{align}
\underset{x}{\mbox{minimize}}\quad F(x) := f(x)+g(x),
\label{eq:opt_prob}
\end{align}
where $f:\mathbb{R}^p\to\mathbb{R}$ is $L$-smooth
  (possibly, nonconvex) and $g:\mathbb{R}^p\to\mathbb{R}\cup\{\infty\}$ is proper lower semi-continuous (\emph{lsc}).
  Here, we say $f$ is $L$-smooth if there exists $L>0$ such that for any $x,y$, it is valid 
  \[\|\nabla f(x)-\nabla f(y)\|\leq L\|x-y\|,\]
where $\|\cdot\|$ denotes the Euclidean norm (or $\ell_2$ norm). 
We assume that $F$ is bounded below. 
In this paper we focus on the case where $g$ is nonsmooth and nonconvex. 

While theoretical results shown in Section \ref{sec:d-stationarity} are valid in a general setting just as stated, 
our focus is on how those algorithms perform in structured sparse optimization problems. 
%
Specifically we further assume that $g$ is DC, namely, it can be represented as the Difference of two Convex functions, i.e., 
$g=g_1-g_2$ with $g_1,g_2$ being both convex, and \eqref{eq:opt_prob} then becomes
\begin{align}
\underset{x}{\mbox{minimize}}\quad F(x) := f(x)+g_1(x)-g_2(x).
\label{eq:opt_prob_dc}
\end{align}
We next explain the details of the problems of interest.
\subsection{Continuous exact penalty reformulation for sparse regression}\label{sec:cxp}
Let $(a_1,b_1),...,(a_N,b_N)\in\mathbb{R}^{p+1}$ be a set of $N$ sampled pairs of input (or feature) vector $\tilde{a}\in\mathbb{R}^p$ and output $\tilde{b}\in\mathbb{R}$, and 
consider to estimate 
a linear 
relation between the input and output:
$\tilde{b}\leftarrow 
\sum_{j=1}^{p}x_j\tilde{a}_{j}
$, 
where 
$x_j$ is the coefficient of the $j$-th input $\tilde{a}_{j},(j=1,...,p)$. 
 (For simplicity, 
 we omit to describe the intercept.) 
A certain empirical risk 
function is usually minimized to estimate the coefficients. 
 For example, minimizing the sum of squared residuals, 
\begin{align*}
f(x)=\sum_{i=1}^{N} (b_i-\sum_{j=1}^{p}a_{ij}x_j)^2,
\end{align*}
is dominantly used for the ordinary regression
, while minimizing the negative likelihood, defined by 
\begin{align*}
f(x)=\sum_{i=1}^{N}\ln\big(1+\exp(-b_i\sum_{j=1}^{p}a_{ij}x_j)\big), 
\end{align*}
of logistic distribution is popular for the 
binary classification 
where $
b_i\in\{\pm1\}$. 
Whatever criterion is used, 
minimizing only an empirical risk function may result in an overfit to the given data set. 
To avoid 
an overfit, 
it is reasonable to consider to select inputs by limiting the number of used coefficients via the $\ell_0$-constrained 
estimation:
\begin{align}
&\underset{
x}{\mbox{minimize}} \quad f(x)
\label{eq:ssr}\\
&\mbox{subject to}                      \quad \|x\|_0\leq K,
\label{eq:l0_constraint}
\end{align}
where $\|x\|_0$ denotes the $\ell_0$ (pseudo-)norm (i.e., the number of non-zero components) of a vector $x$, and $K\in\{0,1,...,N\}$ is a user-defined parameter. 
If $0<K<N$, the $\ell_0$-constraint actually works for variable selection. 
It is widely recognized that the global optimization of \eqref{eq:ssr}--\eqref{eq:l0_constraint} is hard 
because of the combinatorial feature or the discontinuity of the constraint \eqref{eq:l0_constraint}. 
In this paper we tackle 
this problem by using a continuous exact (i.e., not approximate) reformulation studied by, for example, \cite{gotoh2018dc}. 

Let $x_{(i)}$ denote the $i$-th largest element of a vector $x\in\mathbb{R}^{p}$ in absolute values, i.e., $|x_{(1)}|\geq |x_{(2)}|\geq \cdots \geq |x_{(p)}|$. 
Let us denote the sum of the absolute values of the smallest ${p}-K$ components by
\begin{align*}
T_K(x):=
|x_{(K+1)}|+\cdots+|x_{(p)}|.
\end{align*}
It is easy to see that the $\ell_0$-constraint \eqref{eq:l0_constraint} can be replaced with the equality constraint given as $T_K(x)=0$. 
\if
Let $e$ be a closed convex function on $\mathbb{R}$ such that $e(0)=0$ and $e(x)>0$ when $x\neq 0$. 
Among such are $e(x)=x^2$ and $e(x)=|x|$. 
With any $e$ satisfying those properties, 
it is easy to show that \eqref{eq:l0_constraint} can be replaced with the equality constraint:
\begin{align*}
S_K(x):=
\sum\limits_{i=1}^{K}e(x_{(N+1-i)})=0.
\end{align*}
The 
function $S_K(x)$ 
represents 
the sum of the smallest $K$ components of $x$, and 
is generally nonconvex but continuous, unlike $\|x\|_0$. 
In addition, $S_K(x)$ can be represented by a difference of two convex functions. 
 For example, with $e(x)=x^2$, we have $S_K(x)=x_{(N-K+1)}^2+\cdots+x_{(N)}^2$. 
In this paper we focus on the case where $e(x)=|x|$, with which let us define 
\begin{align*}
S_K(x)=T_K(x):=|x_{(N-K+1)}|+\cdots+|x_{(N)}|=\|x\|_1-|\!|\!|x|\!|\!|_{K},
\end{align*}
where $|\!|\!|x|\!|\!|_k$ denotes the sum of the largest $k$ components of $x\in\mathbb{R}^p$ and is called the largest-$k$ norm of $x$, i.e., $|\!|\!|x|\!|\!|_k:=|x_{(1)}|+|x_{(2)}|+\cdots+|x_{(k)}|$. 
\fi
 Here, by moving the constraint to the objective, let us consider 
a penalized version~{\cite{gotoh2018dc}}:
\begin{align}
&\underset{
x}{\mbox{minimize}} \quad 
f(x)+\lambda T_K(x),
\label{eq:penalty_version}
\end{align}
where $\lambda\geq 0$ is a user-defined parameter. 
Since $T_K(x)\geq 0$ for all $x\in\mathbb{R}^p$, by definition, and the second term of the objective function of \eqref{eq:penalty_version} plays a penalty term; 
$T_K(\hat{x})>0$ implies that a point $\hat{x}$ violates the $\ell_0$-constraint, i.e., $\|\hat{x}\|_0>K$. 
What is nice about \eqref{eq:penalty_version} is that it has the exact penalty property. 
Namely, it is shown that for a large $\lambda$, any optimal solution to \eqref{eq:penalty_version} is also optimal to the $\ell_0$-constrained problem \eqref{eq:ssr}--\eqref{eq:l0_constraint}, and vice versa. 
Note that unlike the approaches listed in Introduction, this replacement is not approximation but an equivalent representation to \eqref{eq:l0_constraint}, while maintaining the continuity of the involved function. 
On the other hand, since $T_K$ is nonsmooth and nonconvex similarly to the other continuous approximations, we need to pick up a good solution method. 


Note that \eqref{eq:penalty_version} can be represented as a DC optimization problem of the form \eqref{eq:opt_prob_dc} since $T_K(x)$ can be represented, for example, by
\begin{align}
T_K(x)
=\|x\|_1-|\!|\!|x|\!|\!|_{K},
\label{eq:l1-largeK}
\end{align}
where $|\!|\!|x|\!|\!|_k$ denotes the sum of the largest $k$ components of $x\in\mathbb{R}^p$ and is called the largest-$k$ norm of $x$, i.e., $|\!|\!|x|\!|\!|_k:=|x_{(1)}|+|x_{(2)}|+\cdots+|x_{(k)}|$. 
\section{Existing algorithms}\label{sec:existing_algorithms}
This section describes two groups of algorithms to {approach} 
the nonsmooth nonconvex problems, \eqref{eq:opt_prob} and \eqref{eq:opt_prob_dc}: PGMs (Section \ref{sec:pgm_gist}) and PDCAs (Section \ref{sec:dcas}).
\subsection{PGM and GIST}\label{sec:pgm_gist}
To solve \eqref{eq:opt_prob}, 
PGM updates the incumbent 
$x_{t}$ by the formula:
\begin{align}
x_{t+1}\in {\rm Prox}_{g/\eta_t}\Big(x_t -\frac{1}{\eta_t}\nabla f(x_t)\Big)
\label{eq:PGM}
\end{align}
at each iteration, where {$\eta_0,\eta_1,...,$ are a sequence of positive parameters and}
\begin{align*}
{\rm Prox}_g(y) &:= \underset{x\in\mathbb{R}^p}{\textrm{argmin}}\Big\{g(x)+\frac{1}{2}\|x-y\|^2\Big\}
\end{align*}
is the proximal mapping of $y$ with respect to $g$. The prototype of PGM is described in Algorithm \ref{alg:PGM}. 
If the problem \eqref{eq:opt_prob} is a smooth convex optimization, the condition $\|\nabla F(x_t)\|<\varepsilon$ for a small $\varepsilon>0$ is reasonable for the stopping criterion. However, if it is a nonsmooth or nonconvex problem, the criterion depends on cases. 
As for our problem, it would be reasonable to employ the condition $\|x_t-x_{t-1}\|<\varepsilon$ for a small $\varepsilon>0$, as will be explained.

According to Lu and Li \cite{lu2018sparse}, the proximal {mapping of $x\in\mathbb{R}^p$ 
 with} respect to ${\lambda}T_K$ is given by
\[
\left({\rm Prox}_{{\lambda} T_K}(x)\right)_j
=\left\{
\begin{array}{l@{\quad}l}
x_j,& j\in\mathcal{J},\\
\mathrm{soft}_{\lambda}(x_j),& j\not\in\mathcal{J},\\
\end{array}
\right.
\quad j=1,...,p,
\]
where $\mathcal{J}$ denotes the index set of largest $K$ components {of $x$}, and 
\begin{equation}
\mathrm{soft}_{\lambda}(\xi):=\left\{
\begin{array}{l@{\quad}l}
\xi+\lambda,& \xi\leq -\lambda, \\
0,& -\lambda\leq \xi\leq \lambda,\\
\xi-\lambda,& \xi\geq \lambda,\\
\end{array}
\right.
\label{eq:softthresholding}
\end{equation}
is the soft-thresholding operator {for a component $\xi\in\mathbb{R}$}. 
The operation ${\rm Prox}_{{\lambda}T_K}(x_t)$ can thus be computed efficiently, and it is reasonable to apply PGM to solve \eqref{eq:penalty_version}. 

\begin{algorithm}[tb]
   \caption{Proximal Gradient Method (
PGM)}
   \label{alg:PGM}
\begin{algorithmic}
\STATE {\bf Input:} Problem \eqref{eq:opt_prob}
\STATE {\bf Initialize:} $x_0\in\mathrm{dom}F$, {$\eta_0>0$,} $t\leftarrow0$
\REPEAT \STATE 
{For $\eta_t>0$, update} the incumbent via \eqref{eq:PGM}, and set $t\leftarrow t+1$
\UNTIL{some stopping criterion is satisfied}
\end{algorithmic}
\end{algorithm}
It is known 
 that 
if $f$ is $L$-smooth, 
\if 
$\eta_t>\zeta L$ for some $\zeta>1$, 
any sequence, $\{x_t:t\geq 0\}$, generated by PGM satisfies
\[
F(x_{t+1})\leq F(x_{t})-\frac{(\zeta-1)L}{2}\|x_{t+1}-x_{t}\|^2,
\]
the parameter satisfies { $L<\underline{\eta}<\eta_t<\overline{\eta}$,  where $\underline{\eta}$, $\overline{\eta}$ are constants} 
\fi
any sequence, $\{x_t:t\geq 0\}$, generated by PGM satisfies
\begin{equation}
F(x_{t+1}) \leq F(x_t) - \frac{(1-\frac{L}{\eta_t})\eta_t}{2}\|x_{t+1}-x_{t}\|^2.
\label{eq:suffi_dec}
\end{equation}
When the sequence $\{\eta_t:t\geq 0\}$ satisfies 
\begin{equation}\label{eq:eta-PGM}
L<\hat{L}<\eta_t<\overline{L},\quad t=0,1,...,
\end{equation}
for constants $\hat{L}$ and $\overline{L}$, the term $(1-\frac{L}{\eta_t})\eta_t$ is positive and finite, and 
the sequence $\{F(x_t):t\geq 0\}$ is monotonically decreasing. 
Furthermore, if $F$ is bounded below and the generated sequence $\{x_t\}$ is bounded, then it follows from  \eqref{eq:suffi_dec} that
\begin{align}\label{eq:lim_pgm}
\lim_{t\to\infty}\|x_{t+1}-x_t\|=0.
\end{align}
Consequently, $\{x_t\}$ clusters at a 
point satisfying 
\begin{equation}
0\in 
\partial F(x^\ast) =
\nabla f(x^\ast) + \partial g(x^\ast),
\label{def:stationary_point}
\end{equation}
(see, e.g., Section 5 of \cite{attouch2013convergence}, for the details). 
Here we should recall the definition of subdifferential $\partial F(x)$ of 
a nonconvex function $F$. 
\begin{definition}
 For a proper lower semi-continuous (lsc) function $F:\mathbb{R}^p\to(-\infty,\infty]$, the \emph{limiting subdifferential} {of $F$ at $\bar{x}\in\mathrm{dom}\,F$} is defined as
\begin{align*}
\partial F({\bar{x}})&:=\left\{ \xi \in {\mathbb R}^p \left|
\begin{array}{l}
\exists (x_t,F(x_t)) \to (\bar{x},F(\bar{x})), \\
\quad \xi_t \in \hat\partial F(x_t),~\xi_t \to \xi 
\end{array}
\right.\right\},
\end{align*}
where 
\begin{align*}
\hat\partial F(x)&\!:=\!\left\{ \xi \in {\mathbb R}^p \left| \liminf_{
y \to x}\frac{F(y)\!-\!F(x)\!-\!\langle\xi,y-x\rangle}{\|y-x\|}\! \geq \!0 \right.\right\}
\end{align*}
is called \emph{regular subdifferential}. 
\end{definition}
By definition we have that $\hat\partial F(x) \subset \partial F(x)$. 
Note that when $g$ is a proper lsc convex function, 
both 
$\partial F(x)$ and $\hat\partial F(x)$ 
coincide with the ordinary subdifferential of the convex function, and then we do not have to pay attention to the difference of the two notions of subdifferentials. On the other hand, however, when $g$ is nonconvex and nonsmooth, the difference can be significant. 
For example, $F(x)=\frac{1}{2}x^2-|x|$ is indifferentiable and nonconvex around $x=0$, so that $\hat{\partial}F(0)=\emptyset$. On the other hand, $\partial F(0)=\{-1,1\}$. 

With these 
subdifferentials, we 
can define the two notions of stationarity. 
\begin{definition}
We call $x^\ast$ a \emph{l(imiting)-stationary point} of  \eqref{eq:opt_prob} if $0\in \partial F(x^\ast)$
. On the other hand, we call $x^\ast$ a \emph{(regular) stationary point} of \eqref{eq:opt_prob} if $0\in \hat{\partial} F(x^\ast)$. 
\end{definition}
Obviously, any regular stationary point is l-stationary (Figure \ref{fig:relations_among_stationary_points}). 
It is noteworthy that all the existing works on PGM for nonconvex problems only show the convergence to l-stationary points (see, e.g., Table \ref{tbl:convergence_summary}). 

As a more intuitive necessary condition of local optimality, we introduce the directional stationarity. 
Let us denote the \emph{directional derivative} of $F$ at $x$ with respect to a direction $d\in\mathbb{R}^p$ by
\begin{align*}
F'(x;d) &:= \lim_{\tau\to+0}\frac{F(x+\tau d)-F(x)}{\tau}.
\end{align*}
Then, $F$ is said to be directionally differentiable at $x$ if $F'(x;d)$ exists for any $d$, and $F$ is simply said to be directionally differentiable if $F'(x;d)$ exists for any $d$ and $x\in\mathrm{dom}F$. 
(Here, $F'(x;d)$ may take $\infty$.) 
 
\begin{definition}
We call $x^\ast$ a \emph{d(irectional)-stationary point} of \eqref{eq:opt_prob} if 
$F$ is directionally differentiable at $x^\ast$, and 
\begin{equation}
F'(x^\ast;d) \geq 0\quad\mbox{for all }d\in\mathbb{R}^p.
\label{eq:def:directional-derivative}
\end{equation}
\end{definition}

It is known 
that 
when $F$ is directionally differentiable 
 and locally Lipschitz continuous 
 at $x^\ast$, 
the regular stationarity at $x^\ast$ is equivalent to 
the d-stationarity at $x^\ast$. 
Accordingly, for any directionally differentiable locally Lipschitz continuous function, the d-stationarity is a stronger necessary condition for local optimality of \eqref{eq:opt_prob} than the l-stationarity (see, e.g., \cite{cui2018composite,Li2020understanding} for the details).

{
It is shown in 
\cite{Razaviyayn2013unified} that a class of Successive Upper-bound Minimization (SUM) methods converge to d-stationary points for the nonsmooth nonconvex problems \eqref{eq:opt_prob} when $F$ is directionally differentiable. 
When $\{\eta_t\}$ satisfies \eqref{eq:eta-PGM}, the plain PGM falls into the class and its d-stationarity is shown in the SUM framework. 
On the other hand, PGM with a (nonmonotone) line search \eqref{eq:nmt} 
 is not an SUM method, and hence, we 
%
next 
consider the PGM with a nonmonotone line search aka GIST. 
}

In addition to the proximal operation \eqref{eq:PGM} at each iteration of PGM, GIST (Algorithm \ref{alg:GIST}) employs the Barzilai-Borwein (BB) rule \cite{barzilai1988two} to set the (initial) step size 
${
\hat{\eta}_t
}$ and a nonmonotone line search. 
More specifically, at each iteration GIST initializes the step size by the formula:
\begin{align}
\hat{\eta}_t
=
\min\left\{\overline{\eta}, \max\left\{\underline{\eta},\frac{\langle x_t-x_{t-1},\nabla f(x_t)-\nabla f(x_{t-1}) \rangle}{\|x_t-x_{t-1}\|^2}\right\}\right\},~0<\underline{\eta}<\overline{\eta},
\label{BB}
\end{align}
so that it reflects the curvature of $f$, and adjusts the step size 
by 
backtracking 
$\eta_t=\rho^l
{
\hat{\eta}_t
},l=0,1,...,$ with $\rho>1$, until satisfying 
\begin{align}
F(x_{t+1})\leq \max\{F(x_{t-r+1}),...,F(x_t)\}
 -\frac{\sigma\eta_t}{2}\|x_{t+1}-x_t\|^2,
\label{eq:nmt}
\end{align}
where $\sigma\in(0,1)$ and an integer 
$r\geq 1$
 to approximately ensure the decrease of the optimal values. 

{The condition \eqref{eq:nmt} is modified from \eqref{eq:suffi_dec} in two ways. 
 First, the monotonicity of the decrease of $F(x_t)$ is relaxed when $r\geq 2$ in \eqref{eq:nmt}, while setting $r=1$ imposes the monotonic decrease. 
Second, unlike the (plain) PGM with \eqref{eq:eta-PGM}, GIST allows us to take a longer step size by setting $\eta_t<L$, as long as \eqref{eq:nmt} is fulfilled. 

While the condition $\eta_t<L$ might not satisfy \eqref{eq:nmt}, increasing $\eta_t$ with the backtracking ensures its fulfillment. 
To see this, observe that \eqref{eq:nmt} is always satisfied with $\eta_t$ such that $L<\eta_t$. Note also that $\eta_t$ is increasing and $L<\eta_t<\rho L$ is attained in a finite number of multiplications for the backtracking. Therefore, the condition \eqref{eq:nmt} will be fulfilled eventually. 

Note that this also shows that the sequence $\eta_t$ is bounded, namely, $\hat{\eta}_t\leq \eta_t<\rho L$, which, coupled with a constant $\sigma$, ensures the sufficient decrease as in \eqref{eq:suffi_dec}. (Comparing with \eqref{eq:suffi_dec}, we see that the constant $\sigma\in(0,1)$ corresponds to $1-L/\eta_t$, which is in the interval $(0,1)$ under \eqref{eq:eta-PGM}.)} 
\begin{algorithm}[tb]
   \caption{General Iterative Shrinkage Thresholding (GIST) \cite{gong2013general}}
   \label{alg:GIST}
\begin{algorithmic}
\STATE {\bf Input:} Problem \eqref{eq:opt_prob}; $\rho>1$; $0<\underline{\eta}<\overline{\eta}$
\STATE {\bf Initialize:} $x_0\in\mathrm{dom}F$, $t\leftarrow0$
\REPEAT \STATE Choose ${\hat{\eta}_t}\in[\underline{\eta},\overline{\eta}]$ {and set $\eta_t=\hat{\eta}_t$}.
 \REPEAT
 \STATE $x_{t+1}\in {\rm Prox}_{g/\eta_t}\left(x_t -\frac{1}{\eta_t}\nabla f(x_t)\right)$
 \STATE $\eta_t\leftarrow \rho\eta_t$
 \UNTIL{{line search condition \eqref{eq:nmt} holds.}
 }
\STATE Set $t\leftarrow t+1$.
\UNTIL{some stopping criterion is satisfied}
\end{algorithmic}
\end{algorithm}

Gong et al.~\cite{gong2013general}  shows that 
if the sequence $\{x_t\}$ generated by Algorithm \ref{alg:GIST} is bounded, $F$ is continuous on a compact set containing the sequence, and $F$ is bounded below, then \eqref{eq:lim_pgm} holds. 
Furthermore, 
when $g$ is represented as a difference of two convex functions as in \eqref{eq:opt_prob_dc}, GIST  
clusters at a \emph{critical point}.
\begin{definition}
We call $x^*$ a \emph{critical point} of the DC optimization problem \eqref{eq:opt_prob_dc} if it satisfies
\begin{equation}
0\in \nabla f(x^\ast) + \partial g_1(x^\ast)-\partial g_2(x^\ast).
\label{def:critical_point}
\end{equation}
\end{definition}
\begin{remark}
The term ``critical point" is often used as the synonym for ``stationary point." However, we distinguish these terms in this paper, following the convention of the DC optimization literature.
\end{remark}
\begin{figure}[h]
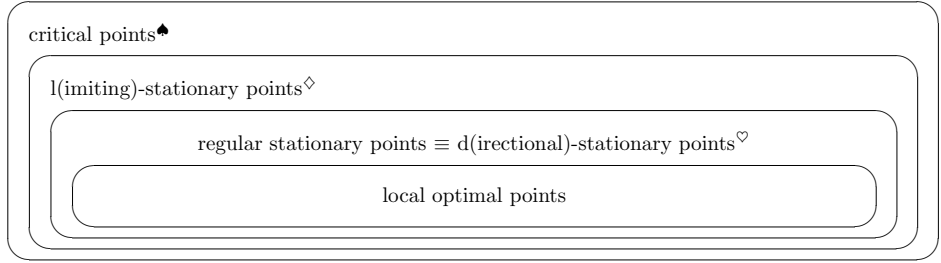

{\centering
\scalebox{0.8}{
\begin{screen}
critical points$^\spadesuit$
\begin{screen}
l(imiting)-stationary points$^\diamondsuit$ 
\begin{screen}\centering
regular stationary points 
 $\equiv$ d(irectional)-stationary points$^\heartsuit$
\begin{screen}\centering 
local optimal points
\end{screen}\vspace{-.5em}
\end{screen}\vspace{-.5em}
\end{screen}\vspace{-.5em}
\end{screen}
}
\caption{Relations of different stationarity notions}
\label{fig:relations_among_stationary_points}
}
\begin{flushleft}
\scriptsize $^\spadesuit$We distinguish ``critical point'' from ``stationary point,'' following the convention of the DC optimization literature; 
$^\diamondsuit$This is often referred to just as `stationary point' for nonsmooth nonconvex optimization; 
$^\heartsuit$`$\equiv$' holds when $F$ is locally Lipschitz continuous near the point and directionally differentiable (see, e.g., {\cite{cui2018composite,Li2020understanding}}).
\end{flushleft}
\end{figure}
Note that the convergence of GIST is not proved in 
\cite{gong2013general} for general nonconvex case \eqref{eq:opt_prob}, but for the DC case \eqref{eq:opt_prob_dc}. 

To demonstrate the difference between the critical point \eqref{def:critical_point} and the l-stationary point \eqref{def:stationary_point}, let us consider $F(x)=f(x)+g_1(x)-g_2(x)$, where $f(x)=\frac{1}{2}(x-2)^2$, $g_1(x)=|x|$, and $g_2(x)=\max\{0,-x\}$. 
We give a plot of $F$ and its slopes in Figure \ref{fig:crit}.
It is easy to see that $x=1$ is the unique minimizer. Observing that $0 \in [-3
,0] =f'(0) + \partial g_1(0) - \partial g_2(0)$, we see from \eqref{def:critical_point} that $x=0$ is a critical point of $F$. However, it is not l-stationary, 
and, actually, $F(x)$ is decreasing around $x=0$ 
since $F(x)=\frac{1}{2}(x-2)^2 + \max\{0,x\}$ and $0 \not\in \partial F(0)=[-2
,-1]$. 
This indicates that when the DC decomposition, $g=g_1-g_2$, is given by two ``non-smooth" functions, $g_1$ and $g_2$, its critical point \eqref{fig:relations_among_stationary_points} might be quite different from any ``stationarity'' because of the difference, $\partial g_1-\partial g_2$, {excessively} relaxes the subgradient $\partial g=\partial (g_1-g_2)$. 
This gap 
plays a significant role, as below, in characterizing the behaviors of the algorithms. 
\begin{figure}[h]
  \begin{center} 
       \includegraphics[
scale=0.5]
{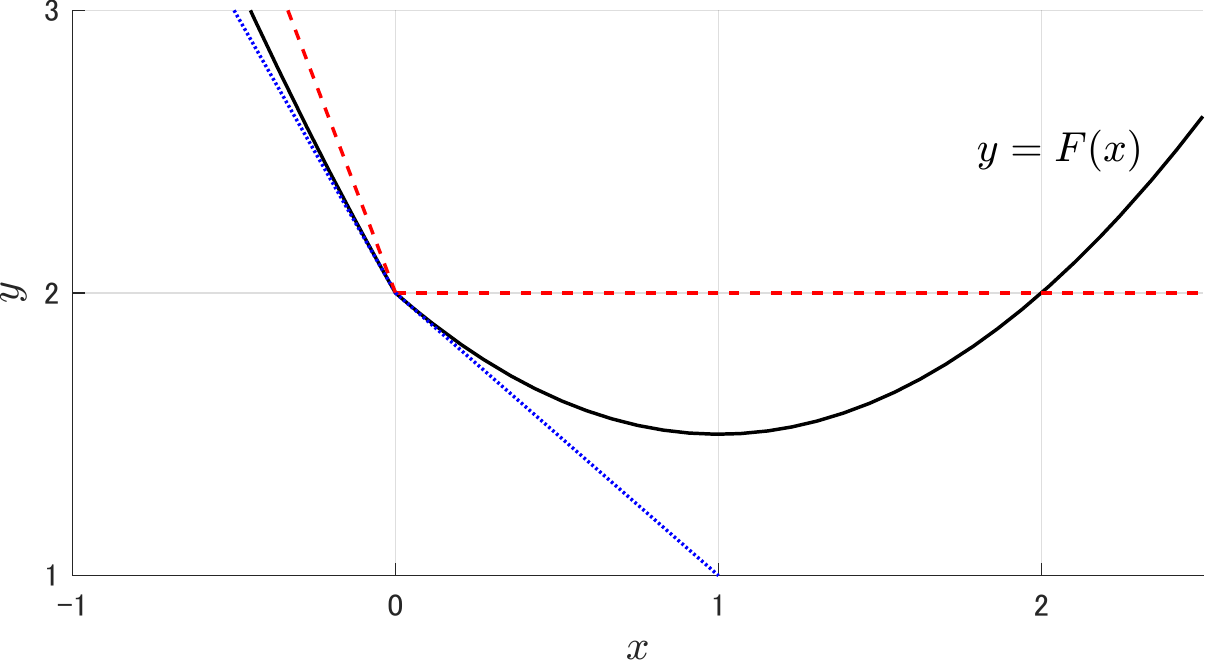}
    \caption{Example of a critical point that is not stationary}
    \label{fig:crit}
  \end{center}
\begin{flushleft}
\scriptsize The point $x=0$ is a critical point of $F=f+g$ under the DC decomposition $g=g_1-g_2$ with $g_1(x)=|x|$ and $g_2(x)=\max\{0,-x\}$, but it is not d-stationary: The (blue) dotted lines indicate the slopes corresponding to the range $[-2,-1]$, the right-hand side of \eqref{def:stationary_point}, whereas the (red) dashed lines indicate the slopes corresponding to $[-3,0]$, the right-hand side of \eqref{def:critical_point} at $x^*=0$. 
\end{flushleft}
\end{figure}

\subsection{PDCA and its extensions}
\label{sec:dcas}
 For the case where $g$ has the DC decomposition as in \eqref{eq:opt_prob_dc}, another method called DCA can be used. 
The basic strategy repeats the two steps: 1) linearization (or computation of a subgradient) of $g_2$; 2) solve the resulting convex subproblem. 
This suggests the update formula of the form:
\begin{equation}
x_{t+1} = {\rm Prox}_{g_1/L}\left(x_t-\frac{1}{L}\left(\nabla f(x_t)-\xi_t\right)\right),
\label{eq:prox_PDCA}
\end{equation}
where $\xi_t\in\partial g_2(x_t)$. 
Especially for \eqref{eq:penalty_version} with the DC decomposition \eqref{eq:l1-largeK}, the formula \eqref{eq:prox_PDCA} results in a soft-thresholding \eqref{eq:softthresholding}. 
In addition, the computation of a subgradient of $g_2(x)=\lambda|\!|\!|x|\!|\!|_K$ can be done efficiently. 
This algorithm is called \emph{PDCA} (Algorithm \ref{alg:PDCA}). 
The PDCA converges to a critical point. 

Several modified algorithms have been recently proposed. 
 For example Wen et al.~\cite{wen2018proximal} proposes \rm{PDCAe} (Algorithm \ref{alg:PDCAe}), 
 adding an extrapolation step for possible acceleration, and shows the convergence to a critical point when $g_1$ is a proper closed convex function, $g_2$ is a continuous convex function, and $F$ is level bounded.
\begin{algorithm}[tb]
   \caption{Proximal DC Algorithm (PDCA)\cite{gotoh2018dc}}
   \label{alg:PDCA}
\begin{algorithmic}
\STATE {\bf Input:} Problem \eqref{eq:opt_prob_dc}
\STATE {\bf Initialize:} $x_0\in\mathrm{dom}F$, $t\leftarrow0$
\REPEAT 
\STATE Compute $\xi_t\in\partial g_2(x_t)$
\STATE Update the incumbent via \eqref{eq:prox_PDCA}, and $t\leftarrow t+1$
\UNTIL{some termination condition holds.}
\end{algorithmic}
\end{algorithm}
\begin{algorithm}[tb]
   \caption{
PDCA with extrapolation (PDCAe) \cite{wen2018proximal}}
   \label{alg:PDCAe}
\begin{algorithmic}
\STATE {\bf Input:} Problem \eqref{eq:opt_prob_dc}, $\beta_t\in[0,1),\sup_t \beta_t<1$
\STATE {\bf Initialize:} $x_0\in\mathrm{dom}g, x_{-1}=x_0$, $t\leftarrow0$
\REPEAT 
\STATE Take any $\xi_t\in\partial g_2(x_t)$.
\STATE Set $y_t=x_t+\beta_t(x_t-x_{t-1})$.
\STATE Update the incumbent via 
\begin{center}$
x_{t+1} = {\rm Prox}_{g_1/L}\left(y_t-\frac{1}{L}\left(\nabla f(y_t)-\xi_t\right)\right),
$\end{center}
and $t\leftarrow t+1$.
\UNTIL{some termination condition holds.}
\end{algorithmic}
\end{algorithm}

Recently 
some modified versions of PDCA have been developed, generating a sequence converging to a d-stationary point \eqref{eq:def:directional-derivative}, which is a stronger condition for local optimality than critical point or l-stationary point. 
Pang et al.~\cite{pang2017computing} introduces an enhanced DCA to ensure the d-stationarity when the convex function $g_2$ is given by the pointwise maximum of finite number of differentiable convex functions. 
Inspired by the algorithm, Lu et al.~\cite{lu2019enhanced} 
develops the Enhanced PDCA (\emph{EPDCA}), which 
retains the convergence to a d-stationary point. 
Lu and Zhou~\cite{lu2019nonmonotone} develops Nonmonotone EPDCA (NEPDCA; Algorithm \ref{alg:NEPDCA}), by further employing the nonmonotone line search and the BB rule as in GIST 
and/or incorporating a randomization of the selection from the active set associated with $g_2$, while retaining the convergence to a d-stationary point. 

To ensure the d-stationarity, 
$g_2$ is further assumed to be given as the pointwise maximum of a finite number of differentiable convex functions, $\gamma_i,i\in\mathcal{I}$, (i.e., $g_2(x)=\max\{\gamma_i(x):i\in\mathcal{I}\}$ for some $\mathcal{I}$). 
Under this modified setting, the d-stationarity 
of \eqref{eq:opt_prob_dc} 
can be characterized as a point satisfying
\[
\forall i\in\mathcal{I},~
0\in\nabla f(x^*)+\partial g_1(x^*)-\nabla \gamma_i(x^*).
\]
Roughly speaking, NEPDCA looks for a point that satisfies stationarity conditions of all the $|\mathcal{I}|$ convex subproblems. 
The key is how to update the (relaxed) set of active functions at $x_t$:
\[
A_\delta(x):=\{i\in\mathcal{I}:\gamma_i(x)\geq g_2(x)-\delta\},
\]
where $\delta\geq 0$ represents a degree of relaxation, so that the cardinality of $A_\delta(x)$ cannot decrease as $\delta$ increases. 
As will be reported, when it is applied to the problem \eqref{eq:penalty_version}, a possible bottleneck of Algorithm \ref{alg:NEPDCA} is in constructing the active set. 
For \eqref{eq:penalty_version} with \eqref{eq:l1-largeK}, $\gamma_i$ is given as 
$
\gamma_i(x) = \langle v,x \rangle
$ 
for some $v$ such that $v_{j} =\mathrm{sign}(x_j)$ if $|x_j|\geq|x_{(K)}|$; ~$0$, otherwise.
Recalling that
\[
|\!|\!|x|\!|\!|_K
 = \max\limits_{v}\big\{ \langle v,x \rangle : \sum\limits_{j=1}^p|v_j|=K,v\in\{0,\pm 1\}^p\big\},
\]
the cardinality of $A_\delta(x)$ can be enormous even for a small $\delta$. 
For example, 
when $\delta=0$ and $|x_{(K-j-1)}|>0=|x_{(K-j)}|=\cdots=|x_{(K)}|=\cdots=|x_{(p)}|$ hold,
$|A_\delta(x)|$ turns out to be $\frac{(p-K+j)!}{j!(p-K)!}$. 
Besides, for a larger $\lambda$ in \eqref{eq:penalty_version}, the soft-thresholding operation returns more zero components, and accordingly 
the largest $K$ components of $x$ can include many zeros, which would lead to the increase in time for computing $A_\delta$. 

Although Lu and Zhou~\cite{lu2019nonmonotone} also developed a random sampling version to improve the efficiency,
at a little cost of deterioration in the quality of the optimal solution, 
we only consider the non-randomized version in this study.
\begin{algorithm}[tb]
   \caption{A deterministic nonmonotone enhanced PDCA with line search \cite{lu2019nonmonotone}}
   \label{alg:NEPDCA}
\begin{algorithmic}
\STATE {\bf Input:} Problem \eqref{eq:opt_prob_dc} where $g_2(x)=\max\{\gamma_i(x):i\in \mathcal{I}\}$; $\delta>0,\rho>1,c\in(0,L/2),0<\underline{\eta}<\overline{\eta}$, $r\in\mathbb{N}$
\STATE {\bf Initialize:} $x_0\in\mathrm{dom}F, t\leftarrow 0$
\REPEAT 
 \STATE 
 Choose ${\eta_{t}}\in[\underline{\eta},\overline{\eta}]$.
  \REPEAT 
   \FOR{$i\in \mathcal{A}_{\delta}(x_t)$}
    \STATE Compute $x_{k,i}(\eta_t)=$
    \begin{equation*}
    {\rm Prox}_{g_1/{\eta_t}}\left(x_t-\frac{1}{\eta_t}\left(\nabla f(x_t)-\nabla\gamma_i(x_t)\right)\right).
    \label{eq:NEPDCA_prox}
\end{equation*}
   \ENDFOR
  \STATE $\hat{i}\in\underset{i\in\mathcal{A}_{\delta}(x_t)}{\textrm{argmin}}\{F(x_{t,i}(\eta_t))+\frac{c}{2}\|x_{t,i}-x_t\|^2\}$
  \IF{$x_{t,\hat{i}}$ satisfies
  \begin{align}
  F(x_{t,\hat{i}}(\eta_t))&\leq\max\Big\{f(x_t)+g_1(x_t)-\gamma_i(x_t),
  \max_{\max\{t-r,0\}\leq j\leq t}F(x_j)\Big\}\notag\\
  &\quad-\frac{c}{2}\|x_{t,\hat{i}}(\eta_t)-x_t\|^2
  -\frac{c}{2}\|x_{t,i}(\eta_t)-x_t\|^2,~\forall i\in\mathcal{A}_{\delta}(x_t)\label{eq:NEPDCA_line}
  \end{align}
  }
  \STATE Set $x_{t+1}\leftarrow x_{t,\hat{i}}(\eta_t)$
  \ENDIF
   \STATE {
   $\eta_t\leftarrow \rho\eta_t$
   }
 \UNTIL{line search condition \eqref{eq:NEPDCA_line} holds.}
\STATE Set $t\leftarrow t+1$
\UNTIL{some termination condition holds.}
\end{algorithmic}
\end{algorithm}

\section{PGM's convergence to d-stationary point}
\label{sec:d-stationarity}
In this subsection we show that PGMs converge to d-stationary points. 
To that end, we make the following assumptions for problem \eqref{eq:opt_prob}.
\begin{assumption}
\label{assump:d-stationary}~
\begin{enumerate}
\item[(i)]
 $f$ is $L$-smooth and $g$ is proper \emph{lsc} and directionally differentiable.
\item[(ii)]
$F$ is bounded below.
\item[(iii)]
$g$ is prox-bounded, i.e., $g+\frac{\eta}{2}\|\cdot\|^2$ is lower bounded for some $\eta>0$.
\end{enumerate}
\end{assumption}
Note that Assumption \ref{assump:d-stationary}(i) implies that $F$ is also directionally differentiable. Also, Assumption \ref{assump:d-stationary}(iii) implies that the proximal mapping is outer semicontinuous \cite[Example 5.23(b)]{rockafellar2009variational}:
\[\limsup_{x \to \bar{x}} {\rm Prox}_{g/\eta}(x) \subseteq {\rm Prox}_{g/\eta}(\bar{x}).
\]
The outer semicontinuity plays an important role in showing the d-stationarity of PGMs. 
Note that in the sparse regression problem \eqref{eq:penalty_version}, Assumption \ref{assump:d-stationary}(iii) is satisfied since $T_K(x)\geq0$; 
the other penalty functions such as SCAD~\cite{fan2001variable}, MCP~\cite{zhang2010nearly}, Capped-$\ell_1$~\cite{zhang2010analysis}, and Log-Sum-Penalty~\cite{candes2008enhancing} also satisfy 
the condition. 

We first give the following lemma.
\begin{lemma}\label{lemma:for_d-stat}
Let
$
\epsilon_t : = \eta_t(x_{t+1} - x_t ) +\nabla f(x_t) -\nabla f(x_{t+1})
$, 
and $x_{t+1}$ be given by \eqref{eq:PGM}. 
Suppose that Assumption \ref{assump:d-stationary}(i) is satisfied. 
Then, we have for any $d\in\mathbb{R}^p$, 
\begin{align}
-
\|\epsilon_t\|
\|d\|\leq F'(x_{t+1};d).
\end{align}
\end{lemma}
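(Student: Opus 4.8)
The plan is to read off a first-order optimality condition directly from the definition of the proximal step and then translate it into a statement about $F'$. Writing $y_t := x_t - \frac{1}{\eta_t}\nabla f(x_t)$, the update \eqref{eq:PGM} says precisely that $x_{t+1}$ is a global minimizer of
\[
\Phi(x) := g(x) + \frac{\eta_t}{2}\|x - y_t\|^2,
\]
since multiplying the objective defining ${\rm Prox}_{g/\eta_t}$ by the positive constant $\eta_t$ does not change the argmin. Because $g$ is directionally differentiable (Assumption \ref{assump:d-stationary}(i)) and the quadratic term is smooth, $\Phi$ is directionally differentiable; and for a global minimizer the one-sided difference quotient $\tau^{-1}(\Phi(x_{t+1}+\tau d) - \Phi(x_{t+1}))$ is nonnegative for every $\tau > 0$, so letting $\tau \to +0$ yields $\Phi'(x_{t+1}; d) \geq 0$ for all $d$.

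Next I would evaluate this directional derivative term by term. The directional derivative of the smooth quadratic at $x_{t+1}$ equals $\eta_t \langle x_{t+1} - y_t, d\rangle$, and since the directional derivative of a sum with one differentiable summand is the sum of the directional derivatives, $\Phi'(x_{t+1};d) = g'(x_{t+1};d) + \eta_t\langle x_{t+1}-y_t, d\rangle$. Substituting $\eta_t(x_{t+1}-y_t) = \eta_t(x_{t+1}-x_t) + \nabla f(x_t)$ into the optimality inequality gives the lower bound
\[
g'(x_{t+1};d) \geq -\langle \eta_t(x_{t+1}-x_t) + \nabla f(x_t),\, d\rangle.
\]

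To finish, I would add the smooth contribution of $f$. Since $f$ is differentiable, $F'(x_{t+1};d) = \langle \nabla f(x_{t+1}), d\rangle + g'(x_{t+1};d)$; inserting the previous bound and collecting terms produces
\[
F'(x_{t+1};d) \geq \langle \nabla f(x_{t+1}) - \nabla f(x_t) - \eta_t(x_{t+1}-x_t),\, d\rangle = -\langle \epsilon_t, d\rangle,
\]
where the last equality is just the definition of $\epsilon_t$. A single application of the Cauchy--Schwarz inequality, $\langle \epsilon_t, d\rangle \leq \|\epsilon_t\|\|d\|$, then yields $F'(x_{t+1};d) \geq -\|\epsilon_t\|\|d\|$, as claimed.

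I do not anticipate a serious obstacle: the argument is essentially the directional-derivative form of the stationarity of a prox step. The only points demanding care are the sum rule for directional derivatives (valid because the quadratic summand is smooth, so its one-sided derivative is the gradient inner product and limits add) and the harmless rescaling of the prox objective by $\eta_t$. Notably, this reasoning invokes only Assumption \ref{assump:d-stationary}(i) and never a DC structure, so it applies to the general nonsmooth nonconvex $g$ of \eqref{eq:opt_prob}.
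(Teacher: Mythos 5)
Your proposal is correct and follows essentially the same route as the paper's proof: both recognize $x_{t+1}$ as a minimizer of $g(x)+\frac{\eta_t}{2}\|x-(x_t-\frac{1}{\eta_t}\nabla f(x_t))\|^2$, extract the nonnegativity of the directional derivative of that objective at $x_{t+1}$ (the paper phrases this via $0\in\hat\partial Q(x_{t+1};x_t)$ and then restricts the liminf to rays, which is what your direct one-sided difference-quotient argument does), substitute the gradient of the quadratic, add $\langle\nabla f(x_{t+1}),d\rangle$ to form $F'(x_{t+1};d)$, and finish with Cauchy--Schwarz. No gaps.
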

See Section \ref{sec:proof_lem:d-stat} for the proof.
By using Lemma \ref{lemma:for_d-stat} and \eqref{eq:lim_pgm}, we have the following main theorem.
\begin{theorem}
\label{thm:d-stat_PGM}
Suppose that Assumption \ref{assump:d-stationary} holds. 
Let the sequence $\{(x_t,\eta_t):t\geq 0\}$ be generated by Algorithm \ref{alg:GIST} (when the termination condition is ignored). 
If 
the generated sequence $\{x_t\}$ is bounded and $F$ is continuous on a compact set containing the sequence, then any accumulation point of $\{x_t\}$ is a d-stationary point of \eqref{eq:opt_prob}. 
\end{theorem}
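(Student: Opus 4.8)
The plan is to show that every accumulation point $x^\ast$ is a fixed point of the proximal step, and then to read off d-stationarity from the minimizer characterization that already underlies Lemma~\ref{lemma:for_d-stat}. First I would fix an accumulation point $x^\ast$ and a subsequence $x_{t_k}\to x^\ast$. Because the hypotheses of the theorem are exactly those under which \eqref{eq:lim_pgm} holds for Algorithm~\ref{alg:GIST}, we have $\|x_{t+1}-x_t\|\to 0$, and hence $x_{t_k+1}\to x^\ast$ as well. Since the backtracking in GIST keeps the step sizes in a bounded positive range (the discussion following \eqref{eq:nmt} gives $\underline{\eta}\le\eta_t<\rho L$), I would pass to a further subsequence along which $\eta_{t_k}\to\eta^\ast$ for some $\eta^\ast>0$. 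Writing $y_t:=x_t-\tfrac{1}{\eta_t}\nabla f(x_t)$ and using continuity of $\nabla f$, this yields $y_{t_k}\to y^\ast:=x^\ast-\tfrac{1}{\eta^\ast}\nabla f(x^\ast)$.

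The key step is to upgrade these convergences into the limiting membership $x^\ast\in{\rm Prox}_{g/\eta^\ast}(y^\ast)$. By \eqref{eq:PGM}, each iterate satisfies $g(x_{t_k+1})+\tfrac{\eta_{t_k}}{2}\|x_{t_k+1}-y_{t_k}\|^2\le g(x)+\tfrac{\eta_{t_k}}{2}\|x-y_{t_k}\|^2$ for every $x$. Taking the liminf as $k\to\infty$ for a fixed $x$, the right-hand side converges (the quadratic is continuous and $g(x)$ is constant in $k$), while on the left the quadratic term converges and the lower semicontinuity of $g$ from Assumption~\ref{assump:d-stationary}(i) gives $\liminf_k g(x_{t_k+1})\ge g(x^\ast)$. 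Hence $x^\ast$ minimizes $g(\cdot)+\tfrac{\eta^\ast}{2}\|\cdot-y^\ast\|^2$, i.e.\ $x^\ast\in{\rm Prox}_{g/\eta^\ast}(y^\ast)$; this is precisely the outer semicontinuity of the proximal map guaranteed by the prox-boundedness in Assumption~\ref{assump:d-stationary}(iii).

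Finally I would conclude d-stationarity by repeating the argument of Lemma~\ref{lemma:for_d-stat} at $x^\ast$ with zero error. Since $x^\ast$ is a global minimizer of $h(x):=g(x)+\tfrac{\eta^\ast}{2}\|x-y^\ast\|^2$ and $h$ is directionally differentiable (as $g$ is, by Assumption~\ref{assump:d-stationary}(i)), we have $h'(x^\ast;d)\ge 0$ for all $d$. Using $x^\ast-y^\ast=\tfrac{1}{\eta^\ast}\nabla f(x^\ast)$ and the smoothness of $f$, a direct computation gives $h'(x^\ast;d)=g'(x^\ast;d)+\eta^\ast\langle x^\ast-y^\ast,d\rangle=g'(x^\ast;d)+\langle\nabla f(x^\ast),d\rangle=F'(x^\ast;d)$. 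Therefore $F'(x^\ast;d)\ge 0$ for every $d\in\mathbb{R}^p$, which is exactly the d-stationarity \eqref{eq:def:directional-derivative}. Equivalently, this is Lemma~\ref{lemma:for_d-stat} in the limit, since $\|\epsilon_{t_k}\|\to 0$ follows from \eqref{eq:lim_pgm}, the boundedness of $\eta_t$, and the $L$-smoothness of $f$.

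I expect the main obstacle to be the passage to the limit: directional derivatives are not continuous, so one cannot simply let $t\to\infty$ in the bound $F'(x_{t+1};d)\ge-\|\epsilon_t\|\|d\|$ of Lemma~\ref{lemma:for_d-stat} and read off $F'(x^\ast;d)\ge 0$. The device that circumvents this is to take the limit at the level of the proximal minimization, where only lower semicontinuity of $g$, continuity of the smooth ingredients, and outer semicontinuity of the proximal map are needed, and to invoke the minimizer-to-directional-derivative implication only once, at $x^\ast$ itself. A secondary point requiring care is the simultaneous convergence of the step sizes $\eta_t$; handling the minimization inequality directly (rather than invoking outer semicontinuity for a fixed $\eta$) absorbs the varying $\eta_{t_k}$ without extra work.
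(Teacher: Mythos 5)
Your proposal is correct and follows the same overall architecture as the paper's proof: establish the fixed-point membership $x^\ast\in{\rm Prox}_{g/\eta^\ast}\bigl(x^\ast-\tfrac{1}{\eta^\ast}\nabla f(x^\ast)\bigr)$ along a subsequence using \eqref{eq:lim_pgm} and the boundedness of the step sizes, and then read off $F'(x^\ast;d)\ge 0$ by running the argument of Lemma~\ref{lemma:for_d-stat} at $x^\ast$ with vanishing error. The one place you genuinely diverge is in how the membership is justified: the paper invokes the outer semicontinuity of the proximal mapping via a chain of results from Rockafellar--Wets (Example 5.23(b), Exercise 7.38, Proposition 5.33), whereas you prove it directly by passing to the liminf in the minimization inequality $g(x_{t_k+1})+\tfrac{\eta_{t_k}}{2}\|x_{t_k+1}-y_{t_k}\|^2\le g(x)+\tfrac{\eta_{t_k}}{2}\|x-y_{t_k}\|^2$ and using only the lower semicontinuity of $g$ and the continuity of $\nabla f$. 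Your route is more elementary and self-contained (it even absorbs the varying $\eta_{t_k}$ without appealing to outer semicontinuity for a fixed parameter), at the cost of not exhibiting the structural fact the paper emphasizes, namely that prox-boundedness yields outer semicontinuity of ${\rm Prox}_{g/\eta}$. You also correctly identify the pitfall that directional derivatives do not pass to the limit, which is exactly why both proofs route the limit through the proximal minimization rather than through the inequality of Lemma~\ref{lemma:for_d-stat} itself.
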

Note that since neither Lemma \ref{lemma:for_d-stat} nor Theorem \ref{thm:d-stat_PGM} assumes the locally Lipschitz continuity of $F$, Theorem \ref{thm:d-stat_PGM} does not claim the regular stationarity $0\in \hat{\partial}F(x^\ast)$ but the d-stationarity \eqref{eq:def:directional-derivative}. On the other hand, if $F$ is locally Lipschitz continuous, Theorem \ref{thm:d-stat_PGM} ensures the regular stationarity.  
\begin{proof}
Let $x^*$ be any accumulation point of $\{x_t\}$ and let $\{x_{t_i}\}$ be a convergent subsequence with $x_{t_i}\to x^*$. By passing to a further subsequence if necessary, we also assume that $\eta_{t_i}\to\eta^*$ for some $\eta^*\in(\underline{\eta},\overline{\eta})$. 
To prove the theorem, it is enough to show
\begin{equation}\label{eq:x^*in}
x^*\in {\rm Prox}_{g/\eta^*}\left(x^* - \frac{1}{\eta^*}\nabla f(x^*)\right),
\end{equation}
since it follows from Lemma \ref{lemma:for_d-stat} with $x_{t+1}=x_t=x^*$ and $\eta_t=\eta^*$ that $F'(x^*;d)\geq 0$ for all $d\in\mathbb{R}^p$. 

From $\eta_{t_i}\to\eta^*$
and Assumption \ref{assump:d-stationary}(iii), we have
\[\limsup_{i\to \infty} {\rm Prox}_{g/\eta_{t_i}}\left(x_{t_i}- \frac{1}{\eta_{t_i}}\nabla f(x_{t_i})\right) \subseteq {\rm Prox}_{g/\eta^*}\left(x^* - \frac{1}{\eta^*}\nabla f(x^*)\right).\]
This follows from Exercise 7.38 of \cite{rockafellar2009variational} together with the facts that ${x_{t_i}}- \frac{1}{\eta_{t_i}}\nabla f(x_{t_i})$ and 
\[
\limsup_{i\to \infty} {\rm Prox}_{g/{\eta_{t_i}}}\Big(x_{t_i}- \frac{1}{\eta_{t_i}}\nabla f(x_{t_i})\Big)\subseteq {\rm g\mathchar`-}\limsup_{i} {\rm Prox}_{g/{\eta_{t_i}}} \Big( x^*-\frac{1}{\eta^*}\nabla f(x^*) \Big), 
\]
where the latter follows from Proposition 5.33 of \cite{rockafellar2009variational}.
It follows from \eqref{eq:PGM} and \eqref{eq:lim_pgm} that
\[
x^*=\lim_{i\to\infty}{x_{t_i+1}}\in\limsup_{i\to \infty} {\rm Prox}_{g/\eta_{t_i}}\left(x_{t_i}- \frac{1}{\eta_{t_i}}\nabla f(x_{t_i})\right).
\]
Therefore, we have \eqref{eq:x^*in}, which means that $x^*$ is a d-stationary point. 
\end{proof}

\begin{remark}
In addition to GIST, any PGM-based algorithms satisfying \eqref{eq:lim_pgm} converge to d-stationary points. Among such are plain PGM (Algorithm \ref{alg:PGM} with \eqref{eq:eta-PGM}) and  Accelerated Proximal Gradient method (APG) \cite{li2015accelerated}. 
While the convergence of PGM with a small step size \eqref{eq:eta-PGM} can also be proven in the SUM framework as mentioned in Section \ref{sec:pgm_gist}, 
Theorem \ref{thm:d-stat_PGM} applies to all such PGMs. 
(see Table \ref{tbl:convergence_PGM}).
\end{remark}

\begin{table}[h]
\begin{center}
\caption{Summary of PGM's convergence points for nonsmooth nonconvex problems}
\label{tbl:convergence_PGM}
\begin{tabular}{@{~}l@{~}||@{~}c@{~}|@{~}c@{~}}
\hline
PGM with & l-stationary point &d-stationary point    \\
\hline
\hline
line search   & Lu and Li~\cite{lu2018sparse}  &  this paper\\
\hline
condition \eqref{eq:eta-PGM}   & Attouch et al.~\cite{attouch2013convergence} &  this paper and Razaviyayn et al.~\cite{Razaviyayn2013unified}  \\
\hline
\end{tabular}
\end{center}
\end{table}

\section{Sparse robust regression}
\label{sec:SpaRobReg}
In this section we extend the methodology developed in this paper by combining the variables selection and outlier detection. 

In real practice of regression analysis, 
we are often suggested to remove outlying samples 
in estimating the model. 
Least Trimmed Squares method (LTS) is among such methodologies which are called {\it robust regression} and can be formulated with the help of a similar idea to the variable selection described in Section \ref{sec:cxp}. 
 Furthermore, those outlier detection methods can be coupled with the sparse regression. 
Consider the formulation proposed by Liu et al.~\cite{liu2019Arefined}. 
\begin{align}
\underset{
x, z}{\mbox{minimize}} \quad &  f(x,z):=\sum_{i=1}^{N} (b_i-
\sum_{j=1}^{p}a_{ij}x_j-z_i)^2 \label{eq:spa.rob.obj}\\
\mbox{subject to}   \quad & 
\|x\|_0\leq K,\quad \|z\|_0\leq \kappa.
\label{eq:ls_2l0}
\end{align}
While the first $\ell_0$-constraint suppresses the number of selected variables (or coefficients), the second one suppresses the number of outlying samples since $z_i=b_i-\sum_{j=1}^{p}a_{ij}x_j\neq 0$ implies that the sample $i$ is discarded from the least square estimation. 
To approach a solution of \eqref{eq:spa.rob.obj}--\eqref{eq:ls_2l0},  Liu et al.~\cite{liu2019Arefined} consider
\begin{equation}\label{eq:LiuRSR}
\underset{
x,z}{\mbox{minimize}} \quad 
f(x,z)+\lambda' T_K(x)+
\iota_{\Omega}(z),
\end{equation}
\noindent
where $\lambda'\geq 0$ is a 
penalty parameter on the violation $T_K(x)>0$ of the first $\ell_0$-constraint of \eqref{eq:ls_2l0} and $\iota_{\Omega}(z)$  is the indicator function of $\Omega=\{z \in\mathbb{R}^N~|~ \|z\|_0\leq \kappa\}$,
and proposes an alternating optimization approach in which PDCAe is applied to the optimization with respect to $x$ and the projection to $\Omega$ is used for the $z$-update. 
In contrast, 
we solve another penalty form:
\begin{align}
\underset{
x,z}{\mbox{minimize}} \quad 
f(x,z)+\lambda_1T_K(x)+\lambda_2T_\kappa(z),
\label{eq:spa.rob.reg_pen}
\end{align}
where $\lambda_1$ and $\lambda_2$ are nonnegative constants for penalties on the violations $T_K(x)>0$ and $T_\kappa(z)>0$, respectively, of the associated $\ell_0$-constraints \eqref{eq:ls_2l0}.

Here let us show the equivalence between \eqref{eq:spa.rob.reg_pen} and 
\eqref{eq:spa.rob.obj}--\eqref{eq:ls_2l0} in a generalized case where the quadratic objective function \eqref{eq:spa.rob.obj} is replaced with a general function $f$ satisfying the following smoothness condition: 
\begin{assumption}
\label{assump:L-smooth_xz}
There exists $M>0$, for any $x_1,x_2\in\mathbb{R}^p$, $z_1,z_2\in\mathbb{R}^N$,
\begin{align*}
\left\|
\begin{matrix}
\nabla_x f(x_1,z_1) -\nabla_x f(x_2,z_2)\\
 \nabla_z f(x_1,z_1)-\nabla_z f(x_2,z_2)
 \end{matrix}\right\|
\leq M
\left\|
\begin{matrix}
x_1-x_2\\
z_1-z_2
\end{matrix}
\right\|.
\end{align*}
\end{assumption}
It is easy to see that the quadratic function \eqref{eq:spa.rob.obj} satisfies this condition. 
 For such $f$, we can prove the existence of exact penalty parameters, with which \eqref{eq:spa.rob.reg_pen} is equivalent to the minimization of $f(x,z)$ under \eqref{eq:ls_2l0}, as below. 
\begin{theorem}
\label{thm:exact.pen_spa.rob.reg}
Let $f:\mathbb{R}^p\times\mathbb{R}^N\to\mathbb{R}$ be any function satisfying Assumption \ref{assump:L-smooth_xz}, and 
let $S(\lambda_1,\lambda_2)\subset\mathbb{R}^p\times\mathbb{R}^N$ denote the set of optimal solutions to \eqref{eq:spa.rob.reg_pen} for $(\lambda_1,\lambda_2)$. 
Suppose that there exist $\underline{\lambda}_1,\underline{\lambda}_2\geq 0$ such that $S(\underline{\lambda}_1,\underline{\lambda}_2)$ is bounded, and let $C_x$ and $C_z$ be constants such that $\|\underline{x}\|\leq C_x$ and $\|\underline{z}\|\leq C_z$ for any $(\underline{x},\underline{z})\in S(\underline{\lambda}_1,\underline{\lambda}_2)$. 
Then, for $\lambda_1,\lambda_2$, such that 
\begin{align}
\lambda_1&>
\max\Big\{\|\nabla_{x}f(0,0)\|+M(\frac{3}{2}C_x+C_z),\underline{\lambda}_1\Big\},
\label{eq:cond_lambda1}\\
\lambda_2&>
\max\Big\{\|\nabla_{z}f(0,0)\|+M(C_x+\frac{3}{2}C_z),\underline{\lambda}_2\Big\},
\end{align}
the minimization of $f(x,z)$ under the $\ell_0$-constraints 
\eqref{eq:ls_2l0} and the unconstrained problem \eqref{eq:spa.rob.reg_pen} are equivalent in that any global optimal solution to \eqref{eq:spa.rob.reg_pen} is globally optimal to the constrained problem 
 (and vice versa) 
 {if $S(\lambda_1,\lambda_2)\subset S(\underline{\lambda}_1,\underline{\lambda}_2)$ holds}.
\end{theorem}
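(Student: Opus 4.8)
The plan is to prove the claim in two stages: first show that when the penalties are large enough, every minimizer $(x^*,z^*)\in S(\lambda_1,\lambda_2)$ of \eqref{eq:spa.rob.reg_pen} automatically satisfies the two $\ell_0$-constraints, i.e.\ $T_K(x^*)=0$ and $T_\kappa(z^*)=0$; and then deduce the equivalence of the two optimal-solution sets from the fact that the penalty terms vanish on the feasible region. First I would fix any $(x^*,z^*)\in S(\lambda_1,\lambda_2)$. The hypothesis $S(\lambda_1,\lambda_2)\subset S(\underline{\lambda}_1,\underline{\lambda}_2)$ together with the boundedness of $S(\underline{\lambda}_1,\underline{\lambda}_2)$ gives $\|x^*\|\le C_x$ and $\|z^*\|\le C_z$; this \emph{a priori} bound is exactly what lets me control the gradient of $f$ at the otherwise unknown optimal point.

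The core step is a truncation-and-descent argument. Assuming for contradiction that $T_K(x^*)>0$, I would form $x'$ by keeping the $K$ largest-in-absolute-value components of $x^*$ and zeroing the rest, so that $T_K(x')=0$ and $\|x'-x^*\|\le\|x'-x^*\|_1=T_K(x^*)$, while also $\|x'-x^*\|\le\|x^*\|\le C_x$. Since Assumption \ref{assump:L-smooth_xz} makes $x\mapsto f(x,z^*)$ have an $M$-Lipschitz gradient, the descent lemma yields
\[
f(x',z^*)-f(x^*,z^*)\le \langle\nabla_x f(x^*,z^*),x'-x^*\rangle+\tfrac{M}{2}\|x'-x^*\|^2 .
\]
I would bound $\|\nabla_x f(x^*,z^*)\|\le\|\nabla_x f(0,0)\|+M\sqrt{\|x^*\|^2+\|z^*\|^2}\le\|\nabla_x f(0,0)\|+M(C_x+C_z)$ using the joint Lipschitz estimate of Assumption \ref{assump:L-smooth_xz}, and bound the quadratic remainder by $\|x'-x^*\|^2\le C_x\,T_K(x^*)$, using one copy of $\|x'-x^*\|\le C_x$ and one of $\|x'-x^*\|\le T_K(x^*)$. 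Since the $z$-penalty is unchanged, this gives
\[
F(x',z^*)-F(x^*,z^*)\le\Big(\|\nabla_x f(0,0)\|+M(\tfrac{3}{2}C_x+C_z)-\lambda_1\Big)T_K(x^*),
\]
which is strictly negative by \eqref{eq:cond_lambda1} and $T_K(x^*)>0$, contradicting optimality of $(x^*,z^*)$. The symmetric argument in the $z$-block, truncating to the $\kappa$ largest components and invoking the companion bound on $\lambda_2$, gives $T_\kappa(z^*)=0$.

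Finally I would assemble the equivalence from $F\ge f$ (penalties nonnegative) and $F=f$ on the feasible set of \eqref{eq:spa.rob.obj}--\eqref{eq:ls_2l0} (where $T_K=T_\kappa=0$). Any $(x^*,z^*)\in S(\lambda_1,\lambda_2)$ is feasible by the two vanishing-penalty claims and satisfies $F(x^*,z^*)=f(x^*,z^*)$, so for every feasible $(x,z)$ we get $f(x^*,z^*)=F(x^*,z^*)\le F(x,z)=f(x,z)$, i.e.\ $(x^*,z^*)$ solves the constrained problem. Conversely, a constrained optimum $(\hat x,\hat z)$ is feasible with $F(\hat x,\hat z)=f(\hat x,\hat z)\le f(x^*,z^*)=F(x^*,z^*)$, and since $(x^*,z^*)$ minimizes $F$ this forces equality, so $(\hat x,\hat z)$ minimizes $F$ as well.

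I expect the main obstacle to be the sharp bookkeeping in the truncation step, in particular matching the quadratic remainder $\tfrac{M}{2}\|x'-x^*\|^2$ to the coefficient $\tfrac{3}{2}C_x$ appearing in \eqref{eq:cond_lambda1}; this relies on simultaneously exploiting $\|x'-x^*\|\le T_K(x^*)$ and $\|x'-x^*\|\le C_x$, and on correctly transferring the joint Lipschitz estimate of Assumption \ref{assump:L-smooth_xz} to the partial gradient in $x$ alone.
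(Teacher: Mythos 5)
Your proof is correct and follows essentially the same route as the paper's: a contradiction argument that restores $\ell_0$-feasibility by truncation, the descent lemma derived from the joint $M$-smoothness, and the two-step bound on $\|\nabla_x f(x^*,z^*)\|$ through the origin, arriving at the identical constant $\|\nabla_x f(0,0)\|+M(\tfrac{3}{2}C_x+C_z)$. The only cosmetic difference is that you zero out all $p-K$ small components at once (using $\|x'-x^*\|\leq\min\{T_K(x^*),C_x\}$), whereas the paper zeroes only the single $(K+1)$-st largest component; both yield the same bound.
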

See Section \ref{sec:proof:thm:exact.pen_spa.rob.reg} for the proof. 


\subsection{Extension of GIST}
To approach \eqref{eq:spa.rob.reg_pen}, we consider to solve a structured optimization problem defined as
\begin{equation}
\underset{x\in\mathbb{R}^p,z\in\mathbb{R}^N}{\textrm{minimize}}\quad F(x,z):=f(x,z)+g(x)+h(z).
\label{minPALM}
\end{equation}
Here we assume that $f:\mathbb{R}^p\times\mathbb{R}^N\to\mathbb{R}$ is 
$M$-smooth
in the sense of Assumption \ref{assump:L-smooth_xz}, and that $g:\mathbb{R}^p\to\mathbb{R}\cup\{\infty\}$ and $h:\mathbb{R}^N\to\mathbb{R}\cup\{\infty\}$ are both proper lsc. 

To solve \eqref{minPALM} we extend the alternating optimization algorithm known as Proximal Alternating Linearized Minimization (PALM) developed by Bolte et al.~\cite{bolte2014proximal}, by employing the BB rule and the nonmonotone line search as GIST does. 
The algorithm is described in Algorithm \ref{alg:GPALM}, which we call General PALM or \emph{GPALM} for short. 
In addition to the alternating proximal mappings defined by \eqref{eq:prox_wrt_x} and \eqref{eq:prox_wrt_z}, the nonmonotone line search is applied. 
We should note that \eqref{GISTPALM} holds if 
$\eta_t^x, \eta_t^z\geq M$, 
and hence we can always find $\eta_t^x$ and $\eta_t^z$ satisfying \eqref{GISTPALM}.
\begin{algorithm}[tb]
   \caption{GPALM}
   \label{alg:GPALM}
\begin{algorithmic}
   \STATE {\bf Input:} $r\geq1$, $\rho_1$, $\rho_2$ $>1$, $\sigma_1,\sigma_2\in(0,1)$, $0<\underline\eta\leq\overline\eta$
   \STATE {\bf Initialize:} $\eta_{t}^x=1$, $\eta_{t}^z=1$, $t=0$
   \REPEAT 
   \REPEAT 
 \STATE 
 Compute
\begin{align}
&x_{t+1}\in\mathrm{prox}_{g/\eta_t^x}\left(x_t-\frac{1}{\eta_t^x}\nabla_{x} f(x_t,z_t)\right),\label{eq:prox_wrt_x}\\
&z_{t+1}\in\mathrm{prox}_{h/\eta_t^z}\Big(z_t-\frac{1}{\eta_t^z}\nabla_{z} f(x_{t+1}, z_t)\Big),\label{eq:prox_wrt_z}\\
   &
   \eta_t^x\leftarrow \rho_1\eta_t^x, \notag\\
   &
   \eta_t^z\leftarrow \rho_2\eta_t^z.\notag
\end{align}

   \UNTIL{the following condition is satisfied
      \begin{align}
      F(x_{t+1},z_{t+1}) &\leq
\max\{F(x_{t-r+1},z_{t-r+1}),...,F(x_t,z_t)\}\notag\\
      &\quad - \frac{\sigma_1}{2}\eta_t^x\|x_{t+1} - x_t\|^2 
      - \frac{\sigma_2}{2}\eta_t^z\|z_{t+1}-z_t\|^2.
    \label{GISTPALM}
      \end{align}
   }
   \STATE Compute the BB step size for the next outer loop:
     \begin{align*}
     \eta^x &= \frac{\langle x_{t+1}-x_t,\nabla_xf(x_{t+1},z_{t+1})-\nabla_xf(x_t,z_t)\rangle}{\|x_{t+1}-x_t\|^2},\\
     \eta^z &= \frac{\langle z_{t+1}-z_t,\nabla_zf(x_{t+1},z_{t+1})-\nabla_zf(x_{t+1},z_t)\rangle}{\|z_{t+1}-z_t\|^2},\\
     &\eta_{t+1}^x=\min\{\overline\eta,\max\{\underline\eta,\eta^x\}\}, \\
     &\eta_{t+1}^z=\min\{\overline\eta,\max\{\underline\eta,\eta^z\}\},
     \end{align*}
     and set $t\leftarrow t+1$ 

   \UNTIL{
some termination condition holds.}
\end{algorithmic}
\end{algorithm}
In order to show the convergence of GPALM to a d-stationary point of \eqref{minPALM}, 
we make the following assumptions for problem \eqref{minPALM}.
\begin{assumption}
\label{assump:d-stationary_PALM}~
\begin{enumerate}
\item[(i)]
 $f$ is $M$-smooth in the sense of Assumption \ref{assump:L-smooth_xz}.  $g$ and $h$ are both proper lsc and directionally differentiable.
\item[(ii)]
$F$ is bounded below.
\item[(iii)]
$g$ and $h$ are prox-bounded, i.e., $g+\frac{\eta}{2}\|\cdot\|^2$ and $h+\frac{\eta}{2}\|\cdot\|^2$ are lower bounded for some $\eta>0$. 
\end{enumerate}
\end{assumption}\noindent
We give the following lemma. This lemma is an extended version of Lemma \ref{lemma:for_d-stat}.
\begin{lemma}\label{lemma:for_d-stat_GPALM}
Let $x_{t+1}$ and $z_{t+1}$ be given by \eqref{eq:prox_wrt_x} and \eqref{eq:prox_wrt_z}, respectively, and
\begin{align*}
\epsilon^x_t&:=\eta_t^x(x_{t+1} - x_t ) +\nabla_x f(x_t,z_t) -\nabla_x f(x_{t+1},z_{t+1}),\\
\epsilon^z_t&:=\eta_t^z(z_{t+1} - z_t ) +\nabla_z f(x_{t+1},z_t) -\nabla_z f(x_{t+1},z_{t+1}).
\end{align*}
Suppose that Assumption \ref{assump:d-stationary_PALM}(i) is satisfied. 
Then we have
\begin{align*}
-\|\epsilon^x_t\|&\|d_x\|-\|\epsilon^z_t\|\|d_z\|\leq
g'(x_{t+1};d_x) +h'(z_{t+1};d_z) \\
&+\langle\nabla_{x} f(x_{t+1},z_{t+1}),d_x\rangle + \langle\nabla_{z} f(x_{t+1},z_{t+1}),d_z\rangle.
\end{align*}
\end{lemma}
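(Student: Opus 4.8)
The plan is to run the one‑block argument behind Lemma \ref{lemma:for_d-stat} separately on the $x$‑ and $z$‑updates and then add the two resulting inequalities. The crucial observation is that each proximal step \eqref{eq:prox_wrt_x}, \eqref{eq:prox_wrt_z} returns a (global) minimizer of a proximal subproblem whose objective is the sum of a directionally differentiable term ($g$ or $h$) and a smooth quadratic. By Assumption \ref{assump:d-stationary_PALM}(i) this subproblem objective is itself directionally differentiable, so its directional derivative at the minimizer is nonnegative in every direction, and it splits as the directional derivative of $g$ (resp. $h$) plus the inner product with the gradient of the quadratic part.

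First I would treat the $x$‑block. Since $x_{t+1}$ minimizes $\phi_x(x):=g(x)+\frac{\eta_t^x}{2}\|x-(x_t-\frac{1}{\eta_t^x}\nabla_x f(x_t,z_t))\|^2$, the first‑order condition $\phi_x'(x_{t+1};d_x)\geq 0$ expands to
\begin{equation*}
g'(x_{t+1};d_x)+\langle \eta_t^x(x_{t+1}-x_t)+\nabla_x f(x_t,z_t),\,d_x\rangle\geq 0.
\end{equation*}
Adding $\langle \nabla_x f(x_{t+1},z_{t+1}),d_x\rangle$ to both sides, I would rewrite the left side as $g'(x_{t+1};d_x)+\langle \nabla_x f(x_{t+1},z_{t+1}),d_x\rangle$ and recognize the right side as $-\langle \epsilon^x_t,d_x\rangle$ by the definition of $\epsilon^x_t$. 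The Cauchy--Schwarz inequality then yields $-\|\epsilon^x_t\|\|d_x\|\leq g'(x_{t+1};d_x)+\langle \nabla_x f(x_{t+1},z_{t+1}),d_x\rangle$.

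The $z$‑block is handled identically, the sole bookkeeping subtlety being that the gradient in \eqref{eq:prox_wrt_z} is evaluated at $(x_{t+1},z_t)$ rather than $(x_t,z_t)$; this is matched exactly by the definition of $\epsilon^z_t$, so the same manipulation gives $-\|\epsilon^z_t\|\|d_z\|\leq h'(z_{t+1};d_z)+\langle \nabla_z f(x_{t+1},z_{t+1}),d_z\rangle$. Summing the two block inequalities produces the claimed bound. I do not expect a serious obstacle beyond this bookkeeping: the only point genuinely demanding care is the justification that $\phi_x'$ (resp. $\phi_z'$) decomposes additively into the directional derivative of the nonsmooth term plus the inner product with the gradient of the quadratic, which rests on Assumption \ref{assump:d-stationary_PALM}(i) together with the elementary fact that the directional derivative of a sum equals the sum of directional derivatives whenever each summand is directionally differentiable.
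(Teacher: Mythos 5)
Your proposal is correct and follows essentially the same route as the paper's proof: apply the one-block argument of Lemma \ref{lemma:for_d-stat} to each proximal subproblem (using that the directional derivative of the subproblem objective is nonnegative at its minimizer and splits into the nonsmooth part plus the gradient of the quadratic), rewrite via $\epsilon^x_t$, $\epsilon^z_t$, apply Cauchy--Schwarz, and sum the two block inequalities.
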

See Section \ref{sec:proof:lem:d-stat_palm} for the proof. 

We next give a lemma which ensures the counterpart of the condition \eqref{eq:lim_pgm} of PGM.
\begin{lemma}\label{lemma:lim_GPALM}
Suppose 
%
that Assumption \ref{assump:d-stationary_PALM} is satisfied. 
Let the sequence $\{(x_t,z_t):t\geq 0\}$ be generated by Algorithm \ref{alg:GPALM} (when the termination condition is ignored). 
If the generated sequence is bounded and $F$ is continuous on a compact set containing the sequence, then we have 
\begin{equation}\label{eq:lim_GPALM}
\lim_{t\to\infty} \|x_{t+1}-x_t\|=0,
\quad\text{and}\quad
\lim_{t\to\infty} \|z_{t+1}-z_t\| = 0.
\end{equation}
\end{lemma}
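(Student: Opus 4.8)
The plan is to transport the Grippo--Lampariello--Lucidi (GLL) nonmonotone line-search argument, which underlies the convergence of GIST, to the alternating setting. The first step is to record that the accepted step sizes stay uniformly bounded away from zero: each $\eta_t^x,\eta_t^z$ is initialized by the clipped BB rule in $[\underline\eta,\overline\eta]$ and the inner loop only multiplies them by $\rho_1,\rho_2>1$, so $\eta_t^x,\eta_t^z\geq\underline\eta$ for every $t\geq1$ (the single initial value being positive as well). Setting $v_t:=F(x_t,z_t)$ and $c:=\tfrac12\underline\eta\min\{\sigma_1,\sigma_2\}>0$, the line-search condition \eqref{GISTPALM} then yields, for the accepted iterate,
\[
v_{t+1}\ \leq\ M_t-c\big(\|x_{t+1}-x_t\|^2+\|z_{t+1}-z_t\|^2\big),\qquad M_t:=\max_{\max\{t-r+1,0\}\leq j\leq t}v_j .
\]
Writing $M_{t+1}=\max\{v_{t+1},M_t'\}$ where $M_t'$ is the maximum of $v$ over an index window contained in that of $M_t$, and using $v_{t+1}\leq M_t$, one sees $M_{t+1}\leq M_t$; together with the lower bound $\inf F>-\infty$ from Assumption \ref{assump:d-stationary_PALM}(ii) this shows $\{M_t\}$ is non-increasing and convergent, say $M_t\downarrow M^*$.

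Next I would run the GLL induction along the \emph{record} indices. Let $\ell(t)\in\{\max\{t-r+1,0\},\dots,t\}$ satisfy $v_{\ell(t)}=M_t$; note $\ell(t)\to\infty$ because $\ell(t)\geq t-r+1$. The claim, proved by induction on $j\geq1$, is that as $t\to\infty$,
\[
\|x_{\ell(t)-j+1}-x_{\ell(t)-j}\|\to0,\qquad \|z_{\ell(t)-j+1}-z_{\ell(t)-j}\|\to0,\qquad v_{\ell(t)-j}\to M^*.
\]
For the base case $j=1$, evaluating the displayed inequality at index $\ell(t)-1$ gives $c(\|x_{\ell(t)}-x_{\ell(t)-1}\|^2+\|z_{\ell(t)}-z_{\ell(t)-1}\|^2)\leq M_{\ell(t)-1}-v_{\ell(t)}$; since $v_{\ell(t)}=M_t\to M^*$ and $M_{\ell(t)-1}\to M^*$, the right-hand side tends to $0$, forcing both displacements to vanish. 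Because the whole sequence lies in a compact set on which $F$ is continuous, hence uniformly continuous, the vanishing displacement gives $v_{\ell(t)-1}-v_{\ell(t)}\to0$ and thus $v_{\ell(t)-1}\to M^*$. The inductive step is the same computation at index $\ell(t)-j-1$: the hypothesis $v_{\ell(t)-j}\to M^*$ and $M_{\ell(t)-j-1}\to M^*$ force $\|x_{\ell(t)-j}-x_{\ell(t)-j-1}\|\to0$ and its $z$-analogue, and uniform continuity then yields $v_{\ell(t)-j-1}\to M^*$.

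Finally I would back-fill from these record subsequences to the full sequence. For an arbitrary index $s$, the record index $\ell(s+r)$ lies in $\{s+1,\dots,s+r\}$, so $s+1=\ell(s+r)-j$ for some $j\in\{0,\dots,r-1\}$, whence $\|x_{s+1}-x_s\|=\|x_{\ell(s+r)-j}-x_{\ell(s+r)-j-1}\|$ is exactly a stage-$(j+1)$ quantity at $t=s+r$. As $s\to\infty$ we have $\ell(s+r)\to\infty$, and because only the finitely many limits for $j+1\in\{1,\dots,r\}$ are involved, the maximum over this finite index set tends to $0$; hence $\|x_{s+1}-x_s\|\to0$ and, identically, $\|z_{s+1}-z_s\|\to0$, which is \eqref{eq:lim_GPALM}. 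The main obstacle is precisely this nonmonotone bookkeeping: unlike plain PGM, $\{v_t\}$ need not decrease, so there is no direct telescoping; the argument must instead be routed through the windowed maxima $M_t$ and the finite-$r$ induction, with uniform continuity of $F$ supplying the crucial bridge from vanishing iterate gaps to vanishing objective-value gaps.
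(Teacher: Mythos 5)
Your proposal is correct and follows essentially the same route as the paper's proof (which adapts Lemma 4 of Wright et al.): the windowed maxima $M_t$ are your $F(x_{\phi(t)},z_{\phi(t)})$, shown non-increasing and convergent, followed by the finite induction over the offset $j$ from the record indices using the lower bound $\eta^x_t,\eta^z_t\geq\underline\eta$ and continuity of $F$ on the compact set. Your explicit back-fill step recovering the full sequence from the record subsequences is the same bookkeeping the paper leaves implicit in its final line.
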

We can prove the lemma in a similar manner to the proof of Lemma 4 in \cite{wright2009sparse}. 
See Section \ref{sec:proof:lim_palm} for the proof. 

By using Lemmas \ref{lemma:for_d-stat_GPALM} and \ref{lemma:lim_GPALM}, we can prove the following theorem in a similar manner to the proof of  Theorem \ref{thm:d-stat_PGM}.
\begin{theorem}
\label{thm:d-stat_GPALM}
Suppose 
that Assumption \ref{assump:d-stationary_PALM} is satisfied. 
Let the sequence $\{(x_t,z_t,\eta^x_t,\eta^z_t):t\geq 0\}$ be generated by Algorithm \ref{alg:GPALM} (when the termination condition is ignored). 
If 
the generated sequence $\{(x_t,z_t)\}$ is bounded and $F$ is continuous on a compact set containing the sequence, then any accumulation point of $\{(x_t,z_t)\}$
is a d-stationary point.
\end{theorem}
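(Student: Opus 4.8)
The plan is to mirror the proof of Theorem~\ref{thm:d-stat_PGM}, now carrying the two blocks $x$ and $z$ in parallel. Let $(x^*,z^*)$ be any accumulation point of $\{(x_t,z_t)\}$ and let $\{(x_{t_i},z_{t_i})\}$ be a subsequence converging to it. Since the step-size sequences are bounded and bounded away from zero (each $\eta_t^x,\eta_t^z$ is initialized in $[\underline\eta,\overline\eta]$ and only scaled by $\rho_1,\rho_2>1$ a finite number of times in the inner loop, as in the GIST analysis), by passing to a further subsequence I may assume $\eta_{t_i}^x\to\eta^{x*}>0$ and $\eta_{t_i}^z\to\eta^{z*}>0$. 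The goal is to establish the two proximal inclusions
\begin{align*}
x^*&\in{\rm Prox}_{g/\eta^{x*}}\Big(x^*-\tfrac{1}{\eta^{x*}}\nabla_x f(x^*,z^*)\Big),\\
z^*&\in{\rm Prox}_{h/\eta^{z*}}\Big(z^*-\tfrac{1}{\eta^{z*}}\nabla_z f(x^*,z^*)\Big),
\end{align*}
after which Lemma~\ref{lemma:for_d-stat_GPALM} evaluated at the fixed point closes the argument.

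For the first inclusion, note that Lemma~\ref{lemma:lim_GPALM} gives $\|x_{t+1}-x_t\|\to0$ and $\|z_{t+1}-z_t\|\to0$, so $x_{t_i+1}\to x^*$ and $z_{t_i+1}\to z^*$ as well. Because $f$ is $M$-smooth, the argument of the $x$-proximal map in \eqref{eq:prox_wrt_x} satisfies $x_{t_i}-\tfrac{1}{\eta_{t_i}^x}\nabla_x f(x_{t_i},z_{t_i})\to x^*-\tfrac{1}{\eta^{x*}}\nabla_x f(x^*,z^*)$. Invoking the outer semicontinuity of ${\rm Prox}_{g/\eta}$ guaranteed by Assumption~\ref{assump:d-stationary_PALM}(iii), through the same appeal to Proposition~5.33 and Exercise~7.38 of \cite{rockafellar2009variational} used in Theorem~\ref{thm:d-stat_PGM} (now also letting $\eta_{t_i}^x\to\eta^{x*}$), and using $x_{t_i+1}\to x^*$, I obtain the first inclusion.

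The second inclusion requires the one extra point of care, which I expect to be the main (if mild) obstacle: the $z$-update \eqref{eq:prox_wrt_z} linearizes $f$ at the \emph{already updated} iterate $(x_{t+1},z_t)$ rather than at $(x_t,z_t)$. Here I again use $x_{t_i+1}\to x^*$ from Lemma~\ref{lemma:lim_GPALM}, so that $z_{t_i}-\tfrac{1}{\eta_{t_i}^z}\nabla_z f(x_{t_i+1},z_{t_i})\to z^*-\tfrac{1}{\eta^{z*}}\nabla_z f(x^*,z^*)$; the same outer-semicontinuity argument, together with $z_{t_i+1}\to z^*$, yields the second inclusion. Thus the staggered evaluation does not break the limiting argument precisely because both $x_{t_i}$ and $x_{t_i+1}$ collapse to the same limit $x^*$.

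Finally, the two inclusions mean that $(x^*,z^*)$ is a fixed point of the GPALM maps, so I may apply Lemma~\ref{lemma:for_d-stat_GPALM} with $x_{t+1}=x_t=x^*$, $z_{t+1}=z_t=z^*$, $\eta_t^x=\eta^{x*}$, and $\eta_t^z=\eta^{z*}$. Then $\epsilon_t^x=\epsilon_t^z=0$, and the lemma gives, for every $(d_x,d_z)$,
\[
0\leq g'(x^*;d_x)+h'(z^*;d_z)+\langle\nabla_x f(x^*,z^*),d_x\rangle+\langle\nabla_z f(x^*,z^*),d_z\rangle.
\]
Since $f$ is differentiable and the nonsmooth part of $F(x,z)=f(x,z)+g(x)+h(z)$ is separable in $x$ and $z$, the right-hand side is exactly $F'\big((x^*,z^*);(d_x,d_z)\big)$. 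Hence $F'\big((x^*,z^*);(d_x,d_z)\big)\geq0$ for all directions, i.e., $(x^*,z^*)$ is d-stationary, completing the proof.
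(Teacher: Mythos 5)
Your proposal is correct and follows essentially the same route as the paper: establish the two proximal fixed-point inclusions at the accumulation point via outer semicontinuity (as in Theorem \ref{thm:d-stat_PGM}), then apply Lemma \ref{lemma:for_d-stat_GPALM} with $\epsilon^x_t=\epsilon^z_t=0$ and use the separability of the nonsmooth part to identify the resulting bound with $F'\big((x^*,z^*);(d_x,d_z)\big)$. Your explicit treatment of the staggered linearization point $(x_{t+1},z_t)$ in the $z$-update, handled via $\|x_{t+1}-x_t\|\to 0$ from Lemma \ref{lemma:lim_GPALM}, is a detail the paper leaves implicit but is exactly the right justification.
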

\begin{proof}
Since the directional derivative of $F$ with respect to $(d_x,d_z)$ 
is defined as
\[
F'(x,z;d_x,d_z)=\lim_{\tau\to+0}\frac{F(x+\tau d_x,z+\tau d_z)-F(x,z)}{\tau},
\]
and we have 
\begin{equation*} 
F'(x,z;d_x,d_z)=g'(x;d_x)+\langle\nabla_{x}f(x,z),d_x\rangle
 +h'(z;d_z)+\langle\nabla_{z}f(x,z),d_z\rangle.
\end{equation*}

Let $(x^*,z^*)$ be any accumulation point of $\{(x_t,z_t)\}$ and let $\{(x_{t_i},z_{t_i})\}$ be a convergent subsequence with $(x_{t_i},z_{t_i})\to (x^*,z^*)$. By passing to further subsequences if necessary, we also assume that $\eta_{t_i}^x\to\eta_*^x$ and $\eta_{t_i}^z\to\eta_*^z$ for some $\eta_*^x, \eta_*^z\in(\underline{\eta},\overline{\eta})$. 
In a similar way to the proof of Theorem \ref{thm:d-stat_PGM},
we have
\[x^* \in {\rm Prox}_{g/\eta^x_*}\Big(x^* -\frac{1}{\eta^x_*}\nabla f(x^*,z^*)\Big),~
z^* \in {\rm Prox}_{h/\eta^z_*}\Big(z^* -\frac{1}{\eta^z_*}\nabla f(x^*,z^*)\Big).\]
Therefore, it follows from Lemma \ref{lemma:for_d-stat_GPALM} with $(x_{t+1},z_{t+1})=(x_t,z_t)=(x^*,z^*)$ and $(\eta_t^x,\eta_t^z)=(\eta^x_*,\eta^z_*)$ that
\begin{align*}
0 &\leq g'(x^*;d_x)+\langle\nabla_{x}f(x^*,z^*),d_x\rangle,\\
0 &\leq h'(z^*;d_z)+\langle\nabla_{z}f(x^*,z^*),d_z\rangle,
\end{align*}
which implies that 
for an accumulation point $(x^\ast,z^\ast)$, we have 
$
F'(x^*,z^*;d_x,d_z) \geq 0,
$ which is the desired result. 
\end{proof}
\begin{remark}
It is known that (plain) PALM 
clusters at an l-stationary point. 
Similar to GPALM, we can prove that PALM 
clusters at a d-stationary point. 
\end{remark}

Similar to PGM, the stopping criterion of Algorithm \ref{alg:GPALM} can be, for example, 
$\|x_{t+1}-x_t\|+\|z_{t+1}-z_t\|<\varepsilon$.

\section{Numerical comparisons}\label{sec:numerical}\label{sec:numerics}
This section reports numerical comparisons among PGMs and PDCAs.
We test  several algorithms in the two categories for  sparse regression problems in Section \ref{sec:SpaReg} and sparse robust regression in  Section \ref{sec:SpaRubReg}. 
All the algorithms were implemented with MATLAB 2017b and run on a laptop PC with 2.3 GHz Intel Core i5, 8 GB RAM, and macOS High Sierra. 

\subsection{Sparse regression}\label{sec:SpaReg}
In this subsection, we compare numerical performances of PGM (Algorithm \ref{alg:PGM}), GIST (Algorithm \ref{alg:GIST}), 
PDCAe (Algorithm \ref{alg:PDCAe}), and NEPDCA (Algorithm \ref{alg:NEPDCA}) for a few sparse regression problems. 
In Section \ref{sec:SpaReg_syn}, we use synthetic data sets, and 
in Section \ref{sec:SpaReg_real}, we use real data sets  from UCI Machine Learning Repository (\url{www.csie.ntu.edu.tw/~cjlin/libsvmtools/datasets/}).
\subsubsection{Case: Synthetic data sets}\label{sec:SpaReg_syn}
We first solved synthetic instances of the problem \eqref{eq:penalty_version} with $f(x)=\frac{1}{2}\|Ax-b\|^2$, 
where $A\in\mathbb{R}^{N\times p}$ and $b\in\mathbb{R}^{N}$. 
 Following 
Lu and Zhou~\cite{lu2019nonmonotone}, 
$A$ and 
$b$ were generated so that each instance would have a critical point $\tilde{x}$ which was not d-stationary (see \cite{lu2019nonmonotone} for the details). 
%

Table \ref{tb:SpaReg_LS} reports the number of iterations (Iter), CPU time in seconds (Time), the logarithm of the objective function value ($\ln F(x)$) and the number of nonzero components of the obtained solution ({$\|x\|_0$}), each showing the mean value over the randomly generated 30 instances. The stopping criterion of all the algorithm was $\|x_{t+1}-x_{t}\|\leq 10^{-8}$. 
For $l=1,2,...,10$, we chose the size and penalty parameters as $(p,N,K,\lambda)=(1000l,1000l,300l,10l)$, and generated 30 random instances for each $l$. Each run started from a common initial point which was randomly given as $x_0=\tilde{x}+0.01\nu$, where $\nu\in\mathbb{R}^p$ were drawn from $\mathrm{U}[-1,1]^p$, i.e., the uniform distribution over $[-1,1]^p$. 
In the table, the best values for each $l$ are given in boldface. 
%
 For GIST, we set $\sigma=10^{-3}$, ${\eta_0 = 1}$, $\underline\eta=10^{-8}$, $\overline\eta=10^8$, $r=4$, and $\rho=2$. 
 For PGM, we set $\eta_t=1.1L$ with $L=\lambda_{\max}(A^\top A)$, the largest eigenvalue of $A^\top A$. 
For PDCAe and NEPDCA, the parameters are set to the same as those of Wen et al.~\cite{wen2018proximal} and Liu et al.~\cite{lu2019enhanced}, respectively. 

We see from Table \ref{tb:SpaReg_LS} that while NEPDCA attained the smallest number of iterations, GIST  is the fastest in computation time. 
While GIST, PGM, 
and NEPDCA attained almost the same objective values, only PDCAe resulted in a significantly higher values, which is along with the observation of \cite{lu2019enhanced}. This seems to reflect the superiority of the algorithms guaranteed to converge to d-stationary points to that is only shown to converge to a critical point. 
Besides, PDCAe sometimes failed to satisfy the $\ell_0$-constraint, while the other 
four succeeded always.

\begin{table*}[t]
\begin{center}
\caption{Comparison for Sparse Least Square Regression}\label{tb:SpaReg_LS}
\scalebox{0.8}{
\begin{tabular}{|c||c|c|c|c||c|c|c|c|}
\hline
&  \multicolumn{4}{|c||}{Iter} &  \multicolumn{4}{|c|}{Time [sec.]} \\\hline
$l$ & GIST & PGM & PDCAe & NEPDCA & GIST & PGM& PDCAe & NEPDCA \\ \hline\hline
1 & 29.9  & 125.7   & 103.1  & \bf22.9  & \bf0.0266  & 0.0971   & 0.0763  & 0.0313    \\\hline
2 & 28.6  & 121.8   & 102.5  & \bf24.6  & \bf0.0829  & 0.3380    & 0.2714  & 0.1121   \\\hline
3 & 31.2  & 121.6    & 100.3  & \bf24.5  & \bf0.2250  & 0.8402   & 0.6878  & 0.2917   \\\hline
4 & 33.6  & 119.4    & 97.2  & \bf24.0  & \bf0.5891  & 1.9914   & 1.5999  & 0.6921   \\\hline
5 & 32.3  & 121.0    & 92.8  & \bf23.1  & \bf0.8018  & 2.8489    & 2.2544  & 1.0794   \\\hline
6 & 30.5  & 121.0    & 101.3  &\bf 23.9  & \bf1.0545  & 4.0385   & 3.3523  & 1.6297    \\\hline
7 & 33.5  & 119.7    & 94.0  & \bf23.8  & \bf1.5833  & 5.3777    & 4.3343  & 2.4651   \\\hline
8 & 33.6  & 122.8    & 100.4  & \bf23.4  &\bf 1.9788  & 6.9424   & 5.7080  & 3.0312   \\\hline
9 & 30.8  & 119.9    & 90.8  &\bf 24.1  & \bf2.2902  & 8.4873   & 6.4644  & 4.1936  \\\hline
10 & 31.8  & 120.5    & 95.6  & \bf24.5  & \bf2.8737  & 10.4521 & 8.3000  & 5.2553  \\ \hline
\hline
& \multicolumn{4}{|c||}{ln$(F(x))$} &  \multicolumn{4}{|c|}{{$\|x\|_0$}}\\\hline
$l$ & GIST & PGM & PDCAe & NEPDCA & GIST & PGM & PDCAe & NEPDCA\\ \hline\hline
1 & 0.22193  & 0.22241    & 1.40528  & \bf 0.22153  &\bf 300.0  &\bf 300.0  & 300.7  &\bf 300.0  \\\hline
2 & \bf0.55461  & 0.55689   & 1.40368  & 0.55734  & \bf600.0  & \bf600.0   & 600.4  & \bf600.0  \\\hline
3 & \bf 0.69628  & 0.70004   & 1.42486  & 0.69873  & \bf900.0  & \bf900.0   & 900.2  &\bf 900.0  \\\hline
4 & \bf0.81495  & 0.81792 & 1.34608  & 0.81949  &\bf 1200.0  &\bf 1200.0 & 1200.1  &\bf 1200.0  \\\hline
5 & \bf 0.90785  & 0.90920   & 1.52883  & 0.90842  & \bf1500.0  & \bf1500.0  & 1500.2  & \bf1500.0  \\\hline
6& 0.98646  & 0.98767  & 1.33702  & \bf 0.98598  &\bf 1800.0  & \bf1800.0    & 1800.1  & \bf1800.0  \\\hline
7 & \bf1.05668  & 1.05687    & 1.40063  & 1.05700  & \bf2100.0   & \bf2100.0  & 2100.1  &\bf 2100.0  \\\hline
8 & 1.10416  & 1.10560    & 1.10420  & \bf 1.10323  &\bf 2400.0   &\bf 2400.0  & \bf2400.0  & \bf2400.0  \\\hline
9 & 1.16587  & 1.16550    & 1.77804  &\bf  1.16546  & \bf2700.0    & \bf2700.0  & 2700.2  &\bf 2700.0  \\\hline
10 & 1.20919  & 1.20897   & 1.43860  & \bf 1.20768  &\bf 3000.0   &\bf 3000.0  &\bf 3000.0  & \bf3000.0\\ \hline
\end{tabular}
}
\end{center}
\end{table*}

\subsubsection{Case: Real data sets}\label{sec:SpaReg_real}
We solved real data sets of the problem \eqref{eq:penalty_version} with $f(x)=\frac{1}{2}\|Ax-b\|^2$, 
where $A\in\mathbb{R}^{N\times p}$ and $b\in\mathbb{R}^{N}$. We used the {\tt triazines} data set, which is of size $(N,p-1)=(186,60)$. To virtually incorporate the intercept in the model, all-one vector was added to the input data, 
and 
the variable $x_1$, which corresponds to the intercept, was excluded from the penalty function, i.e., we used $T_K(x_2,\cdots,x_p)$ instead of $T_K(x)$. 
Since PGM was apparently inferior to GIST 
in the previous experiment, we here compare only GIST, 
PDCAe, and NEPDCA. 

Figure \ref{fig:triazines} and Table \ref{tbl:triazines} show the results of the four algorithms. 
Each run 
started from the same initial point $x_0=0.1\nu$
, where $\nu\in\mathbb{R}^p$ was drawn 
from 
$\mathrm{U}[-1,1]^p$. 
The stopping criterion was $\|x_{t+1}-x_{t}\|\leq 10^{-6}$ for all the 
four algorithms. In Table \ref{tbl:triazines}, `90+' means that the algorithm 
did not fulfill the criterion within 90 [sec.]. 

The two DCA approaches were competitive only for small $\lambda$'s (i.e., $\lambda=0.001,0.1$), with which the obtained solutions failed to satisfy the $\ell_0$-constraint. On the other hand, for large $\lambda$'s (i.e., $\lambda=10,1000$), which are of interest in the exact penalty context, they 
were trapped at apparently worse solutions, 
resulting in higher objective values by overshooting the $\ell_0$-constraint. 
 %
\begin{table*}[h]
\begin{center}
\caption{GIST vs. PDCAe vs. NEPDCA in Sparse Least Square Regression with the data set {\tt triazines}}
    \label{tbl:triazines}
\scalebox{0.8}{

\begin{tabular}{|c|c||c|c|c||c|c|c|c||c|c|c||c|c|c|c|}
\hline
&&
 \multicolumn{3}{|c||}{Iter} &  \multicolumn{3}{|c|}{Time [sec.]}   \\ \hline
$\lambda$ & $K$ & GIST & PDCAe & NEPDCA & GIST & PDCAe & NEPDCA\\ \hline
0.001 & 9 & 2124  & 2500 & 1096 & 0.22487  & 0.19205 &0.16583  \\\hline
0.1 & 9 & 2817& 1898 & 2264 & 0.26507  & 0.14024 &0.30017 \\\hline
10 & 9 & 639 & 484 & 158 & 0.07050  & 0.04343 & 14.72229\\\hline
1000 & 9 & 723  & 330 & 5 & 0.07644  &0.03321 & 90+ \\\hline\hline
&&
  \multicolumn{3}{|c||}{$F(x)$} &  \multicolumn{3}{|c|}{$f(x)$}   \\ \hline
$\lambda$ & $K$ & GIST & PDCAe & NEPDCA & GIST& PDCAe & NEPDCA \\ \hline
0.001 & 9 & \bf1.32932 & 1.32934 & 1.32935 & \bf1.32713  & 1.32715 & 1.32715  \\\hline
0.1 & 9 &\bf1.41426 & 1.41428 &\bf1.41426 & \bf1.36434  & 1.36443 & \bf 1.36434 \\\hline
10 & 9 &\bf1.61452 & 1.95086 & 1.75285 & \bf1.61452 & 1.95086 & 1.75285 \\\hline
1000 & 9 &\bf 1.58680 & 2.17224 & 3.21618 & \bf1.58680 & 2.17224 & 3.21618 \\\hline\hline
&&
  \multicolumn{3}{|c||}{$\|(x_2,\cdots,x_p)\|_0$}  \\ \cline{1-5}
$\lambda$ & $K$ & GIST & PDCAe & NEPDCA   \\\cline{1-5}
0.001 & 9 & 59 & 58 & 59 \\\cline{1-5}
0.1 & 9 &32  & 32 & 32 \\\cline{1-5}
10 & 9 &\bf 9 & 5 & 6 \\\cline{1-5}
1000 & 9 &\bf 9& 3 & 3\\\cline{1-5}
\end{tabular}
}
\end{center}
\end{table*}

\begin{figure}[h]
  \begin{center} 
     \includegraphics[clip,width=\linewidth]{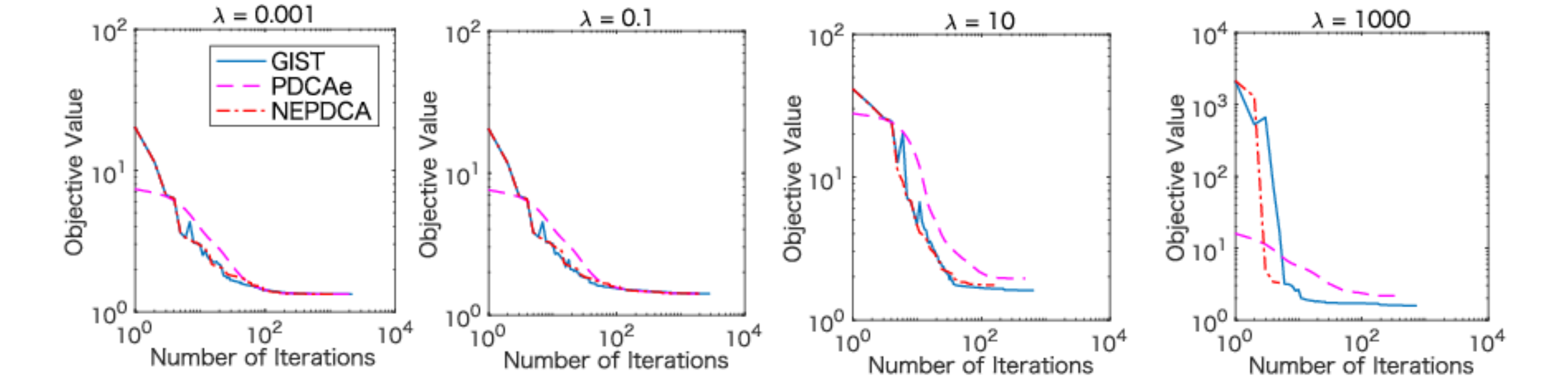}
    \caption{Objective values vs. iterations loglogplot (data set: {\tt triazines})}
    \label{fig:triazines}
  \end{center}
\end{figure}

We next 
applied GIST 
and NEPDCA 
to  the logistic regression problem \eqref{eq:penalty_version} with $f(x)=\frac{1}{N}\sum_{i=1}^N{\rm ln}(1+\exp(-b_i(x_1+\sum_{j=2}^p a_{ij} x_j))$ and $g=T_K(x_2,\cdots,x_p)$.  
Since Lu and Zhou \cite{lu2019nonmonotone} reported that PDCAe was inferior to NEPDCA and the previous experiment confirmed a similar result, we here compare only GIST, 
and NEPDCA as the three best methods.
We used another relatively big data set {\tt real-sim}, 
 which is of size $(N,p-1)=(72309,~20958)$. 
In each run, 
the same initial point $x_0=0.1\nu$ was 
used where $\nu\in\mathbb{R}^p$ were 
drawn 
from 
$\mathrm{U}[-1,1]^p$. 
The upper time limit was set to 600 [sec.].
The stopping criterion was $\|x_{t+1}-x_{t}\|\leq 10^{-6}$ for all the three algorithms.
No algorithms 
fulfilled the criterion within 
the time limit. 

 From Table \ref{tb:realsim}, we see that 
while GIST 
resulted in the designated cardinality, 
NEPDCA resulted in overly sparse solutions. 
The smaller number of iterations of NEPDCA reflects the situation where lots of zero components were introduced by the soft-thresholding and the active set size $|\mathcal{A}_{\delta}(x_t)|$ grew exponentially at each iteration. 

 From all the observations above, 
we see that the PGM-based methods 
tend to find 
better solutions in a more efficient way than the PDCA-based methods for the sparse regression problems. 
In particular, GIST was the best through our experiments. 

\begin{table*}[h]
\begin{center}
\caption{GIST vs. NEPDCA for Sparse Logistic Regression with the data set {\tt real-sim}}\label{tb:realsim}
\scalebox{0.72}{

\begin{tabular}{|c|c||c|c||c|c||c|c||c|c||}
\hline
&& \multicolumn{2}{|c||}{Iter}  &  \multicolumn{2}{|c|}{$\|(x_2,\cdots,x_p)\|_0$} &   \multicolumn{2}{|c||}{$F(x)$} &  \multicolumn{2}{|c|}{$f(x)$}   \\ \hline
 $\lambda$ & $K$ & GIST &NEPDCA & GIST &NEPDCA  & GIST &NEPDCA & GIST &NEPDCA  \\ \hline
0.001 & 19& 855  & 721 &\bf19 &\bf19 &  \bf0.31253  & 0.32227 & \bf0.31253  & 0.32227  \\\hline
0.1  & 19&13  & 12  & \bf19   & 18 & \bf0.31560  &0.61738& \bf0.31560 & 0.61738   \\\hline
10  & 19&13  & 4 &\bf 19   & 18 & \bf0.31648 & 0.61496& \bf0.31648  & 0.61496  \\\hline
1000  &19& 13  & 4  & \bf19    & 18 & \bf0.31648  & 0.61496 &\bf0.31648  & 0.61496\\\hline
\end{tabular}
}
\end{center}
\end{table*}

\subsection{Sparse robust regression}\label{sec:SpaRubReg}
Lastly we compared three methods for the sparse least trimmed square regression problem \eqref{eq:spa.rob.obj}--\eqref{eq:ls_2l0}.  
We applied GPALM (Algorithm \ref{alg:GPALM}) and (plain) PALM \cite{bolte2014proximal} to the 
problem of the form \eqref{eq:spa.rob.reg_pen} with $f(x,z)=\frac{1}{2}\|Ax-b-z\|^2$ where $A\in\mathbb{R}^{N\times p}$ and $b\in\mathbb{R}^{N}$, and applied the PDCAe-projection approach (PDCAe-proj, for short) of Liu et al.~\cite{liu2019Arefined} to \eqref{eq:LiuRSR}. 

 For the four common scales $l=1,...,4$, the size and penalty parameters were given as $(p,N,K,\kappa,\lambda_1,\lambda_2)=(2560l,720l,80l,15l,\tilde\lambda l,\tilde\lambda l)$, where the three different levels of penalty $\tilde\lambda=0.01,1,100$ were considered; 
we generated 30 random instances for each combination of the parameters. 
Each run of every algorithm started from the same initial point $x_0=0.01\nu_x$ and $z_0=0.01\nu_z$, where $\nu_x\in\mathbb{R}^p$ and $\nu_z\in\mathbb{R}^N$ were drawn 
from $U[-1,1]^{p+N}$; 
The stopping criterion was $\|x_{t+1}-x_{t}\|+\|z_{t+1}-z_{t}\|\leq 10^{-6}$ for all the three algorithms. 
For GPALM, we set $\sigma_1=\sigma_2=10^{-3}$, {$\eta_0^x=1$, $\eta_0^z=1$,} $\underline\eta=10^{-8}$, $\overline\eta=10^8$, $r=6$, and $\rho_1=\rho_2=2$. 
For PALM, we set $\eta_t^x=1.1\lambda_{\max}(A^\top A)$, and $\eta_t^z=1.1$. 
For PDCAe-proj, 
the associated parameters are set to the same values as 
in \cite{liu2019Arefined}.

For small penalty parameters, we see from Table \ref{tb:SpaRobReg_LS} that PCDAe-proj 
was better than GPALM and PALM in computation time and the number of iterations, but we need to be careful in looking at the quality of the obtained solutions. While PDCAe-proj attained the designated cardinality of $z$, which corresponds to the number of outlying samples, it was forced to be fulfilled at each iteration due to the projection operation of $z$ to the $\ell_0$-constraint. On the other hand, PDCAe-proj resulted in less sparsity for $x$, which corresponds to selected variables. From the columns of `$\ln (F(x,z))$' we see that while GPALM, PALM, and PDCAe-proj attained almost the same objective values for small $\lambda$'s, GPALM attained slightly but consistently better average values. 

For large penalty parameters, GPALM performed better than the other two in solution quality and computation time. Specifically, it seems that PDCAe-proj resulted in a bad critical point while the two PGM-based methods, GPALM and PALM, constantly attained the designated cardinality (with equality) for $x$ and smaller objective values. Furthermore, GPALM performed better than PALM on average, which indicates that the BB-rule and the non-monotone line search improve on not only the computation efficiency but also the solution quality.


%

\begin{table*}[h]
\begin{center}
\caption{GPALM vs. PALM vs. PDCAe-proj for Sparse Robust Regression for the synthetic data}\label{tb:SpaRobReg_LS}
\scalebox{0.7}{
\begin{tabular}{|c|c||c|c|c||c|c|c||c|c|c||c|c|c|}
\hline
& & \multicolumn{3}{|c||}{Iter} & \multicolumn{3}{|c||}{Time [sec.]} & \multicolumn{3}{|c|}{$\|x\|_0$}   \\\hline
$\lambda$ & $n$ & GPALM & PALM & PDCAe & GPALM & PALM & PDCAe & GPALM & PALM & PDCAe \\
\hline\hline
0.01 & 2560 & {809.1}  & 1147.4  & \bf252.8  & 1.491  & 1.262  & \bf{0.255} & 268 & 269.1 & 322.2 \\\hline
0.02 & 5120 & {409.8}  & 586.3  &\bf 206.5  & 2.903  & 2.257  & \bf{0.737} & 245.3 & 248.9 & 255.8 \\\hline
0.03 & 7680 & 277.1  & 423.7  & \bf{202.0}  & 5.957  & 4.787  & \bf{2.167}  & 243.7 & 250.4 & 245 \\\hline
0.04 & 10240 & 215.3  & 370.1  & \bf{201.6}  & 8.099  & 7.134  & \bf{3.724}   & \bf{320} & 320.8 & {313.4}  \\\hline
1 & 2560 & \bf{33.8}  & 285.9  & 141.9  & \bf{0.077}  & 0.315  & 0.142  & \bf{80} & \bf{80} & {56.2}  \\\hline
2 & 5120 & \bf{37.2}  & 309.0  & 115.7  & \bf{0.285}  & 1.170  & 0.411  & \bf{160} & \bf{160} & {73.2}\\\hline
3 & 7680 & \bf{37.2}  & 298.2  & 108.5  & \bf{0.843}  & 3.350  & 1.167  & \bf{240} & \bf{240} & {109.9} \\\hline
4 & 10240 & \bf{36.1}  & 313.1  & 108.2  & \bf{1.442}  & 6.153  & 2.041   & \bf{320} & \bf{320} & {148.6}  \\\hline
100 & 2560 & \bf{35.9}  & 297.4  & 103.3  & \bf{0.069}  & 0.313  & 0.094  & \bf{80} & \bf{80} & {37.5}  \\\hline
200 & 5120 & \bf{37.2}  & 296.8  & 107.6  & \bf{0.264}  & 1.085  & 0.358  & \bf{160} & \bf{160} & {74.6} \\\hline
300 & 7680 & \bf{36.3}  & 310.6  & 107.6  & \bf{0.794}  & 3.402  & 1.116 & \bf{240} & \bf{240} & {111.5} \\\hline
400 & 10240 & \bf{35.9}  & 307.8  & 107.2  & \bf{1.424}  & 6.016  & 2.020  & \bf{320} & \bf{320} & {148.2} \\\hline
\hline
&  & \multicolumn{3}{|c||}{ln$(F(x,z))$} & \multicolumn{3}{|c||}{ln$(f(x,z))$}  & \multicolumn{3}{|c|}{$\|z\|_0$} \\\hline
$\lambda$ & $n$ & GPALM & PALM & PDCAe & GPALM & PALM & PDCAe & GPALM & PALM & PDCAe \\
\hline\hline
0.01 & 2560 &  \bf{-1.6264}  & -1.6243  & -1.6098  & \bf{-1.9440}  & -1.9432  & -1.8820  & 95.9 & 96.2 & \bf{15} \\\hline
0.02 & 5120  & \bf{-1.2337}  & -1.2275  & -1.2299  & \bf{-1.2935}  & -1.2923  & -1.2789   & 60.5 & 60.8 & \bf{30} \\\hline
0.03 & 7680 & \bf-{1.0406}  & -1.0256  & -1.0379  & \bf{-1.0423}  & -1.0303  & -1.0393  & 47.6 & 47.5 & \bf{45} \\\hline
0.04 & 10240& \bf{-0.9177}  & -0.8947  & -0.9004  & \bf{-0.9177}  & -0.8950  & -0.9004   & \bf{60} & \bf{60} & \bf{60} \\\hline
1 & 2560 &  \bf{-1.5245}  & -1.4575  & 0.7687  & \bf{-1.5245}  & -1.4575  & 0.7687 & \bf{15} & \bf{15} & \bf{15} \\\hline
2 & 5120   & \bf{-1.2166}  & -1.1555  & 1.6980  & \bf{-1.2166}  & -1.1555  & 1.6980   & \bf{30} & \bf{30} & \bf{30} \\\hline
3 & 7680  & \bf{-1.0447}  & -0.9944  & 2.0364  & \bf{-1.0447}  & -0.9944  & 2.0364  & \bf{45} & \bf{45} & \bf{45} \\\hline
4 & 10240   & \bf{-0.9167}  & -0.8607  & 2.1624  & \bf{-0.9167}  & -0.8607  & 2.1624  & \bf{60} & \bf{60} & \bf{60} \\\hline
100 & 2560   & \bf{-1.5177}  & -1.3595  & 1.5554  & \bf{-1.5177}  & -1.3595  & 1.5554  &\bf{15} & \bf{15} & \bf{15} \\\hline
200 & 5120  & \bf{-1.2182}  & -1.1535  & 1.8679  & \bf{-1.2182}  & -1.1535  & 1.8679  & \bf{30} & \bf{30} & \bf{30} \\\hline
300 & 7680  & \bf{-1.0417}  & -0.9883  & 2.0425  & \bf{-1.0417}  & -0.9883  & 2.0425  &  \bf{45} & \bf{45} & \bf{45} \\\hline
400 & 10240  & \bf{-0.9228}  & -0.8698  & 2.1658  & \bf{-0.9228}  & -0.8698  & 2.1658  & \bf{60} & \bf{60} & \bf{60}\\\hline
\end{tabular}
}
\end{center}
\begin{flushleft}
\scriptsize In this table `PDCAe' stands for PDCAe-proj.
\end{flushleft}
\end{table*}



\section{Concluding remarks}\label{sec:conclusion}
In this paper we show that PGM converges to a d-stationary point without any special modification and so do its derivatives, such as GIST 
and GPALM. 
Numerical results 
demonstrate stably better performances of those PGM-based methods over PDCAe, which is only proved to converge to a critical point, in the context of the sparse optimization problems given by the exact penalty form. 
Among the PDCA-based methods, NEPDCA performed better than PDCAe, but it took longer than the PGM-based methods, especially when the penalty parameter is large because of the growing size of the active set at each iteration of NEPDCA. 
While both PDCAe and NEPDCA performed in a comparable manner to GIST or GPALM for small penalty parameters, 
the obtained solutions 
failed to fulfill the 
$\ell_0$-constraint then. This is because the soft-thresholding operation involved in the PDCAs results in an excessively sparse solution when the penalty parameter 
is large. 
Consequently, 
the PDCA-based methods are less attractive for the sparse optimization problem based on the DC decomposition \eqref{eq:l1-largeK}, 
and PGM-methods are more suitable. Especially, GIST seems to be the most promising among the candidates considered in this paper. 


A possible downside of 
the PGM-based methods can be found when the proximal operation of the nonsmooth nonconvex function $g$ is not computationally tractable. 
In such a case, PDCA could be advantageous. Especially 
NEPDCA (or its derivative) may be helpful compared to the other PDCA versions which are only proved to converge to critical points since NEPDCA is proven to converge to a d-stationary point. 



\paragraph{acknowledgement}
S. Nakayama is supported in part by JSPS KAKENHI Grant 20K11698 and 20K14986.
J. Gotoh is supported in part by JSPS KAKENHI Grant 19H02379, 19H00808, and 20H00285. 

\appendix
\section{Appendix}
\subsection{Proof of Lemma \ref{lemma:for_d-stat}}\label{sec:proof_lem:d-stat}
Let
\[
q(x;x_t) := \frac{\eta_t}{2}\left\|x-\left(x_t-\frac{1}{\eta_t}\nabla f(x_t)\right)\right\|^2.
\]
Since $x_{t+1}$ is a minimizer of $\min Q(x;x_t) := g(x) + q(x;x_t)$, we have 
$0 \in \hat\partial Q(x_{t+1};x_t),$ 
which yields
\begin{align*}
0 &\leq  \liminf_{y\neq x_{t+1}, y \to x_{t+1}}\frac{ Q(y;x_t ) - Q(x_{t+1};x_t )}{\|y-x_{t+1}\|}\\
 &\leq \lim_{\tau \to +0}\frac{ Q(x_{t+1}+\tau d;x_t ) - Q(x_{t+1};x_t )}{\tau\|d\|}\\
 &\leq \lim_{\tau \to +0}\frac{ g(x_{t+1}+\tau d) - g(x_{t+1})}{\tau\|d\|}  +\lim_{\tau \to +0}\frac{ q(x_{t+1}+\tau d;x_t ) - q(x_{t+1};x_t )}{\tau\|d\|}
 \end{align*}
for all $d \in \mathbb{R}^p$.
Therefore, we have
\[
0\leq g'(x_{t+1};d) + \langle\eta_t(x_{t+1} - x_t ) +\nabla f(x_t),d\rangle,
\]
which implies
\[
-
\langle\epsilon_t,d\rangle \leq g'(x_{t+1};d) + \langle\nabla f(x_{t+1}),d\rangle
 = F'(x_{t+1};d).
\]
Therefore, 
$
-\|\epsilon_t\|
\|d\|\leq 
F'(x_{t+1};d).$
\hfill$\Box$

\subsection{Proof of Theorem \ref{thm:exact.pen_spa.rob.reg}}\label{sec:proof:thm:exact.pen_spa.rob.reg}
%
We only show the condition for $\lambda_1$ since we can show that for $\lambda_2$ in the same manner. 
We prove the statement by contradiction. 
Suppose that $\|x^*\|_0>K$ and $x^*_{(K+1)}>0$.  
It follows from Assumption \ref{assump:L-smooth_xz} that for any $x_1,~x_2\in\mathbb{R}^p$, $\tilde{z}\in\mathbb{R}^N$
\begin{align*}
\|\nabla_x f(x_1,\tilde{z}) - \nabla_xf(x_2,\tilde{z})\|
\leq
\left\|
\begin{matrix}
\nabla_x f(x_1,\tilde{z}) -\nabla_x f(x_2,\tilde{z})\\
 \nabla_z f(x_1,\tilde{z})-\nabla_z f(x_2,\tilde{z})
 \end{matrix}\right\|
\leq M
\left\|
x_1-x_2
\right\|,
\end{align*}
which means
\[f(x_1,\tilde{z}) \leq f(x_2,\tilde{z}) + \nabla_{x}f(x_2,\tilde{z})^\top(x_1-x_2)+\frac{M}{2}\|x_2-x_1\|^2.\]
Let $\tilde{x}:=x^*-x_i^*e_i$, then the above inequality yields
\[f(\tilde{x},z^*) \leq f(x^*,z^*) {-} \nabla_{x}f(x^*,z^*)^\top(x_i^*e_i)+\frac{M}{2}(x_i^*)^2,\]
where $i=(K+1)$.
Therefore, we obtain
\begin{align*}
F(x^*,z^*)-F(\tilde{x},z^*)
 & =f(x^*,z^*)+\lambda_1T_{K}(x^*)+\lambda_2T_{\kappa}(z^*)\\
 & \qquad\quad-
 \left( f(\tilde{x},z^*)+\lambda_1T_{K}(\tilde{x})+\lambda_2T_{\kappa}(z^*)\right)\\
 & \geq\nabla_{x}f(x^*,z^*)^\top(x_i^*e_i)-\frac{M}{2}(x_i^*)^2+\lambda_1|x^*_i|\\
 & \geq |x^*_i|(\lambda_1-\|\nabla_{x}f(x^*,z^*)\|-\frac{MC_x}{2}).
\end{align*}
Noting that
\begin{align*}
\|\nabla_{x}f(x^*,z^*)\|
&\leq \|\nabla_{x}f(0,0)\|+\|\nabla_{x}f(x^*,z^*)-\nabla_{x}f(0,0)\|\\
&\leq \|\nabla_{x}f(0,0)\|+M(\|x^*\|+\|z^*\|)\\
&\leq \|\nabla_{x}f(0,0)\|+M(C_x+C_z),
\end{align*}
and \eqref{eq:cond_lambda1}, we have
\begin{equation*}
F(x^*,z^*)-F(\tilde{x},\tilde{z})
\geq |x^*_i|[\lambda_1-\|\nabla_{x}f(0,0)\|-M(\frac{3}{2}C_x+C_z)]>0,
\end{equation*}
which contradicts the optimality of $x^*$. 
Similarly, we can derive the condition for $\lambda_2$. 
\hfill$\Box$

\subsection{Proof of Lemma \ref{lemma:for_d-stat_GPALM}}\label{sec:proof:lem:d-stat_palm}
Let 
\begin{align*}
Q_x(x|x_t,z_t)&:=g(x)+q_x(x|x_t,z_t),\\
Q_z(z|x_{t+1},z_t)&:=h(z)+q_z(z|x_{t+1},z_t),
\end{align*}
with
\begin{align*}
q_x(x|x_t,z_t)&:=\frac{\eta^x_t}{2}\left\|x-\Big(x_t-\frac{1}{\eta_t^x}\nabla_{x}f(x_t,z_t)\Big)\right\|^2,\\
q_z(z|x_{t+1},z_t)&:=\frac{\eta^z_t}{2}\left\|z-\Big(z_t-\frac{1}{\eta_t^z}\nabla_{z}f(x_{t+1},z_t)\Big)\right\|^2.
\end{align*}
 From \eqref{eq:prox_wrt_x} and \eqref{eq:prox_wrt_z}, $x_{t+1}$ and $z_{t+1}$ minimize $q_x(x|x_t,z_t)$ and $q_z(z|x_{t+1},z_t)$, respectively, and accordingly we have 
\[
0\in \hat{\partial}_x Q_x(x_{t+1}|x_t,z_t), \quad  
0\in \hat{\partial}_z Q_z(z_{t+1}|x_{t+1},z_t).
\]

Similarly to the proof of Lemma \ref{lemma:for_d-stat}, we have
\begin{align*}
0&\leq g'(x_{t+1};d_x) + \langle\eta^x_t(x_{t+1} - x_t) +\nabla_x f(x_t,z_t),d_x\rangle,\\
0&\leq h'(z_{t+1};d_z) + \langle\eta^z_t(z_{t+1} - z_t) +\nabla_z f(x_{t+1},z_t),d_z\rangle,
\end{align*}
which implies
%
\begin{align*}
-\|\epsilon^x_t\|\|d_x\|\leq g'(x_{t+1};d_x) + \langle\nabla_{x} f(x_{t+1},z_{t+1}), d_x\rangle,\\
-\|\epsilon^z_t\|\|d_z\|\leq h'(z_{t+1};d_z) + \langle\nabla_{z} f(x_{t+1},z_{t+1}), d_z\rangle.
\end{align*}
\hfill $\Box$
\subsection{Proof of Lemma \ref{lemma:lim_GPALM}}
\label{sec:proof:lim_palm}
Denoting 
\begin{equation*}\label{eq:lemma_palm_nonmono1}
\phi(t) =  \underset{j=\max\{0,t-r+1\},...,t}{\rm argmax}F(x_j,z_j),
\end{equation*}
we can rewrite \eqref{GISTPALM} as
\begin{equation}\label{eq:lemma_palm_nonmono2}
F(x_{t+1},z_{t+1}) \leq F(x_{\phi(t)},z_{\phi(t)}) -\frac{\sigma_1}{2}\eta_t^x\|x_{t+1} - x_t\|^2 
   - \frac{\sigma_2}{2}\eta_t^z\|z_{t+1}-z_t\|^2.
\end{equation}
Then we have
\begin{align*}
F(x_{\phi(t+1)},z_{\phi(t+1)}) &= 
   \max_{j=0,1,...,\min\{r-1,t+1\}} F(x_{t+1-j},z_{t+1-j}) \\
   &= \max\left\{\max_{j=1,...,\min\{r-1,t+1\}} F(x_{t+1-j},z_{t+1-j}),F(x_{t+1},z_{t+1})\right\}\\
   &\leq\max\Big\{F(x_{\phi(t)},z_{\phi(t)}),F(x_{\phi(t)},z_{\phi(t)}) \\
    & \qquad \qquad-\frac{\sigma_1}{2}\eta_t^x\|x_{t+1} - x_t\|^2  - \frac{\sigma_2}{2}\eta_t^z\|z_{t+1}-z_t\|^2\Big\}\\
    &= F(x_{\phi(t)},z_{\phi(t)}),
\end{align*}
which implies that the sequence $\{ F(x_{\phi(t)},z_{\phi(t)}):t\geq 0\}$ is monotonically 
decreasing. 
Therefore,  since $F$ is bounded below, there exists $\bar{F}$ such that 
\begin{equation}\label{eq:palm_limit}
\lim_{t\to \infty}F(x_{\phi(t)},z_{\phi(t)})=\bar{F}.
\end{equation}
By applying \eqref{eq:lemma_palm_nonmono2} with $t$ replaced by $\phi(t)-1$, we obtain
\begin{align*}
F(x_{\phi(t)},z_{\phi(t)}) &\leq F(x_{\phi(\phi(t)-1)},z_{\phi(\phi(t)-1)}) -\frac{\sigma_1}{2}\eta_{\phi(t)-1}^x\|x_{\phi(t)}  - x_{\phi(t)-1}\|^2 \\
 &  \qquad\qquad
   - \frac{\sigma_2}{2}\eta_{\phi(t)-1}^z\|z_{\phi(t)}-z_{\phi(t)-1}\|^2.
\end{align*}
Therefore, it follows from \eqref{eq:palm_limit} that 
\[
\lim_{t\to\infty}  \sigma_1\eta_{\phi(t)-1}^x\|x_{\phi(t)}  - x_{\phi(t)-1}\|^2  + \sigma_2\eta_{\phi(t)-1}^z\|z_{\phi(t)}-z_{\phi(t)-1}\|^2= 0.
\]
Since  $\eta_{\phi(t)-1}^x$, $\eta_{\phi(t)-1}^z\geq\underline\eta$, we have 
\begin{equation}\label{eq:lemma_palm_lim1}
\lim_{t\to\infty}  \|x_{\phi(t)}  - x_{\phi(t)-1}\|^2 =0 \quad
\text{and}\quad
\lim_{t\to\infty}  \|z_{\phi(t)}-z_{\phi(t)-1}\|^2= 0.
\end{equation}
With \eqref{eq:palm_limit}, \eqref{eq:lemma_palm_lim1}, 
the boundedness of the sequence, and continuity of $F$, 
we have
\begin{align*}
\bar{F} & = \lim_{t\to \infty}F(x_{\phi(t)},z_{\phi(t)})\\
& = \lim_{t\to \infty}F(x_{\phi(t)-1} + (x_{\phi(t)}  - x_{\phi(t)-1}) ,z_{\phi(t)-1}+(z_{\phi(t)}-z_{\phi(t)-1}))\\
 &=\lim_{t\to \infty}F(x_{\phi(t)-1},z_{\phi(t)-1}).
 \end{align*}

Next we 
prove, by induction, that the following 
for all $j\geq1$,
\begin{align}
\lim_{t\to\infty}  \|x_{\phi(t)}  - x_{\phi(t)-j}\|^2 =0 \quad
\text{and}\quad
\lim_{t\to\infty}  \|z_{\phi(t)}-z_{\phi(t)-j}\|^2= 0,\label{eq:lemma_palm_lim2}\\
\lim_{t\to \infty}F(x_{\phi(t)-j},z_{\phi(t)-j})=\bar{F}.
\label{eq:palm_limit2}
\end{align}
We have already 
observed that the results hold for $j=1$. 
Suppose that \eqref{eq:lemma_palm_lim2} and \eqref{eq:palm_limit2} hold for $j$. 
From \eqref{eq:lemma_palm_nonmono2} with $t$ replaced by $\phi(t)-j-1$, we get
\begin{align*}
F(x_{\phi(t)-j},z_{\phi(t)-j}) &\leq F(x_{\phi(\phi(t)-j-1)},z_{\phi(\phi(t)-j-1)})\\
&\qquad -\frac{\sigma_1}{2}\eta_{\phi(t)-j-1}^x\|x_{\phi(t)-j} - x_{\phi(t)-j-1}\|^2  \\
& \qquad   - \frac{\sigma_2}{2}\eta_{\phi(t)-j-1}^z\|z_{\phi(t)-j}-z_{\phi(t)-j-1}\|^2,
\end{align*}
which ensures \eqref{eq:palm_limit2}. 
 Furthermore, it follows from $\eta_{l}^x$, $\eta_{l}^z\geq\underline\eta$ for all $l$ that \eqref{eq:lemma_palm_lim2}.
Hence, we have \eqref{eq:lim_GPALM}.
\hfill $\Box$


\end{document}